\numberwithin{equation}{section}
\numberwithin{figure}{section}
\theoremstyle{plain}
\newtheorem{theorem}{Theorem}[section]
\newtheorem{lemma}[theorem]{Lemma}
\newtheorem{proposition}[theorem]{Proposition}
\theoremstyle{definition}
\newtheorem{definition}{Definition}[section]
\newtheorem{remark}{Remark}[section]
\newcommand{\bitem}{\begin{itemize}}
\newcommand{\eitem}{\end{itemize}}
\newcommand{\mc}[1]{\mathcal{#1}}
\newcommand{\N}{\mathbb{N}}
\newcommand{\R}{\mathbb{R}}
\newcommand{\bpm}{\begin{pmatrix}}
\newcommand{\epm}{\end{pmatrix}}
\newcommand{\bsm}{\left(\begin{smallmatrix}}
\newcommand{\esm}{\end{smallmatrix}\right)}
\newcommand{\T}{\top}
\newcommand{\ol}[1]{\overline{#1}}
\newcommand{\wt}[1]{\widetilde{#1}}
\newcommand{\la}{\langle}
\newcommand{\ra}{\rangle}
\newcommand{\mrm}[1]{\mathrm{#1}}
\newcommand{\veps}{\varepsilon}
\newcommand{\w}{\omega}
\newcommand{\vphi}{\varphi}
\newcommand{\eins}{\mathbb{1}}
\DeclareMathOperator{\Diag}{Diag}
\DeclareMathOperator{\dom}{dom}
\DeclareMathOperator{\intr}{int}
\DeclareMathOperator{\argmin}{arg min}
\DeclareMathOperator{\supp}{supp}
\DeclareMathOperator{\ggrad}{grad}
\DeclareMathOperator{\KL}{KL}
\newcommand\numberthis{\addtocounter{equation}{1}\tag{\theequation}}          
\renewcommand{\@maketitle@hook}{% the hook provided, all below is put before the title
    \parbox[c]{.5\linewidth}{Pure and Applied Functional Analysis\\Volume 8, Number 3, 2023, 855--880}\par
    \vspace{15pt}
}
\title[Multilevel Geometric Optimization]{Multilevel Geometric Optimization\\for Regularised Constrained Linear Inverse Problems}
\author[S.~M\"uller]{Sebastian M\"uller}
\address[S.~M\"uller]{Dept.~Mathematics and Comp.~Science, Heidelberg University, Germany} 
\email{sebastian.mueller@math.uni-heidelberg.de}
\author[S.~Petra]{Stefania Petra}
\address[S.~Petra]{Dept.~Mathematics and Comp.~Science, Heidelberg University, Germany} 
\email{petra@math.uni-heidelberg.de}
\urladdr{\url{https://stpetra.com}}
\author[M.~Zisler]{Matthias Zisler}
\address[M.~Zisler]{Dept.~Mathematics and Comp.~Science, Heidelberg University, Germany} 
\email{zisler@math.uni-heidelberg.de}
\keywords{multilevel geometric optimization, nonlinear prolongation and restriction, information geometry, Riemannian optimization}
\subjclass[2010]{65K10, 49J40, 49M37, 68U10, 74P20, 90C06}
\dedicatory{Dedicated to Professor Lev Bregman on occasion of his 80th birthday}
\begin{document}
\begin{abstract}
We present a geometric multilevel optimization approach that smoothly incorporates box constraints. Given a box constrained optimization problem, we consider a hierarchy of models with varying discretization levels. Finer models are accurate but expensive to compute, while coarser models are less accurate but cheaper to compute. When working at the fine level, multilevel optimisation computes the search direction based on a coarser model which speeds up updates at the fine level. Moreover, exploiting geometry induced by the hierarchy the feasibility of the updates is preserved. In particular, our approach extends classical components of multigrid methods like restriction and prolongation to the Riemannian structure of our constraints.
\end{abstract}
\maketitle
\tableofcontents
%%%%%%%%%%%%%%%%%%%%%%%%%%%%%
% !TEX root =  ../SI-Bregman.tex
%%%%%%%%%%%%%%%%%%%%%%%%%%%%%%%

\section{Introduction}\label{sec:Introduction}
\subsection{Overview, Motivation}
Multiscale representations have a long tradition in numerical analysis \cite{Hackbusch:1985,Trottenberg:2001} and in signal and image processing \cite{Mallat:2009aa,Christensen:2015wp}, but much less so in numerical optimization if we ignore unconstrained convex quadratic optimization problems that boil down to solving a linear symmetric positive-definite system. 
In this paper, we consider constrained convex optimization problems of the form
\begin{equation}\label{eq:problem-basic}
    \min_{y\in [0,1]^{n}} f(y)
\end{equation}
with smooth objective functions $f\colon \R^{n}\to\R$. The main goal is to accelerate a first-order iterative algorithm for solving \eqref{eq:problem-basic} by adopting a multilevel problem representation analogous to numerical multigrid, in order to compute efficiently descent directions for $f$ at the original fine level using a much smaller number set of variables at coarser levels. 

A key ingredient of the approach is to replace the feasible set $[0,1]^{n}$ by the interior $(0,1)^{n}$ that is turned into a Riemannian manifold $(\mc{B}^{n},g)$ using information geometry \cite{Amari:2000aa}. This geometry differs from the more common geometry underlying interior point methods \cite{Nesterov:2002aa} (see Section \ref{sec:related-work}). Transfer of variables from fine to coarse levels and vice versa are performed by restriction and prolongation mappings, that are adapted to the geometry and whose differentials are used for pushing tangent vectors between levels. Criteria for invoking coarse level computation and certifying resulting search directions are provided. Numerical results illustrate the effectiveness of our approach for the specific instance of \eqref{eq:problem-basic}
\begin{equation}\label{eq:intro-approach}
\min_{y\in[0,1]^n} f(y),\qquad f(y):= D_\varphi(Ay,b) + J(y),
\end{equation}
where $b\in\R_{++}^{p}$ is a positive measurement vector, $A \in \R^{p\times n}$ is a nonnegativity-preserving linear mapping, $J$ is a smoothed total variation regularizer \cite[eq.~(2.5)]{Censor:2020aa} and $D_{\vphi}$ is the Bregman divergence with respect to the convex function 
\begin{equation}\label{eq:def-Breg-inducing-func}
\varphi \colon \R^{n} \to \R,\qquad
y \mapsto \varphi(y) = \la y, \log y \ra - \la \eins, y \ra.
\end{equation}
See Figure \ref{fig:find-coarse-A} and the caption for further explanation.

Although we consider the specific problem \eqref{eq:problem-basic} in this paper, we believe that our approach generalizes to other constrained convex programs, analogous to the way how open convex parameter sets of probability distributions are turned into statistical manifolds \cite{Lauritzen:1987aa,Amari:2000aa}.

\begin{figure}
    \centerline{
    \includegraphics[width=0.3\textwidth]{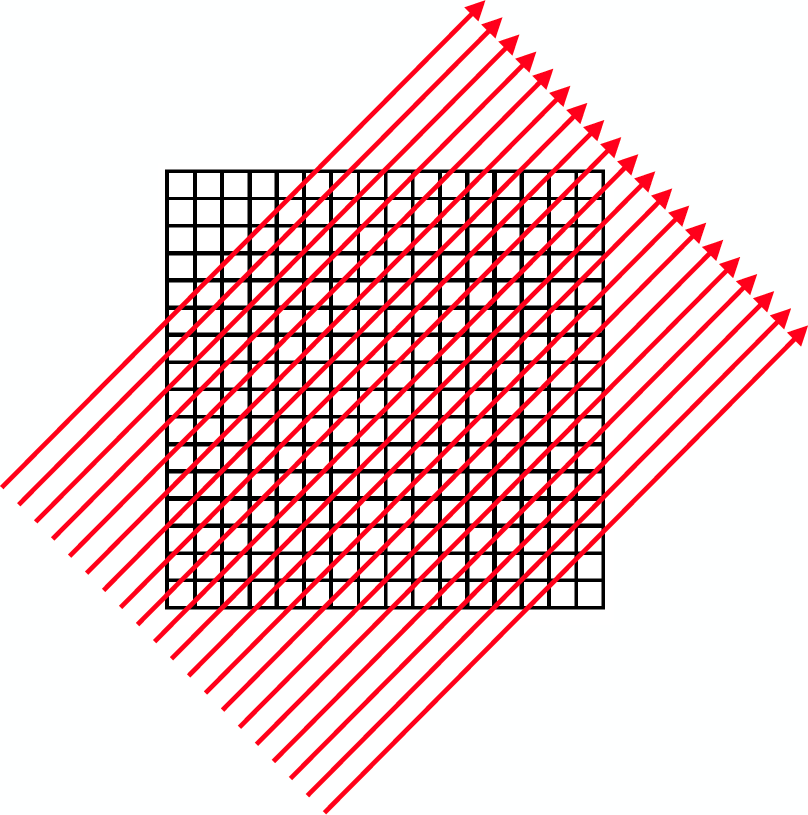}
    \hspace{0.1\textwidth}
    \includegraphics[width=0.3\textwidth]{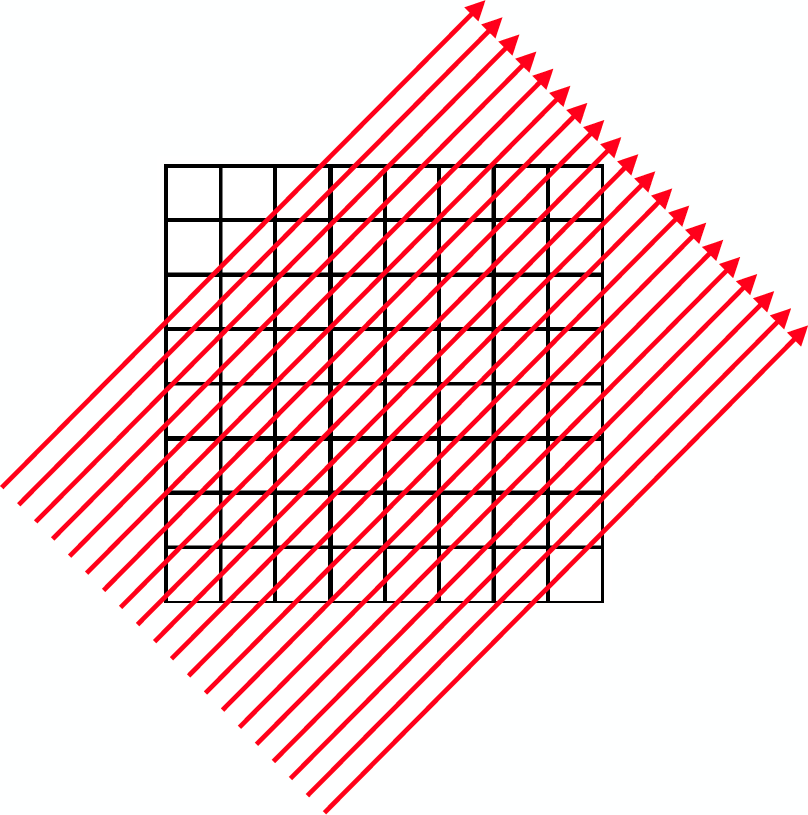}
    }
    \caption{\textbf{Left.} Incidence geometry of projection rays and cells indexed by $I_{n}$, that cover a subset in $\R^{d}$ (here: $d=2$ for illustration). The corresponding line integrals define the matrix $A$ that linearly maps cell values $y_{i},\, i\in I_{n}$ to projections $b_{j},\, j\in \{1,\dotsc,p\}$. This gives rise to an underdetermined linear system $A y = b$. The task is to recover $y$ from $b$ by solving \eqref{eq:intro-approach} using the constraints and the regularizer to obtain a well-posed problem. \textbf{Right.} Reducing the spatial resolution yields a coarse level representation of the projection matrix, %(possibly after removing redundant cells), 
    that enables to evaluate a surrogate of the objective function on the coarse level which does not involve the data vector $b$ recorded on the fine level. 
    %This coarse level representation is used to compute efficiently descent directions on the fine level using a much smaller subset of variables at the coarse level.
    This coarse level representation allows to compute descent directions on 
    the fine level efficiently using a smaller subset of variables at the coarse level.
    }
    \label{fig:find-coarse-A}
\end{figure}

\subsection{Related Work}\label{sec:related-work}
\textit{Multilevel optimization} has been introduced by Nash \cite{Nash:2000}. 
Multilevel smooth unconstrained optimization is the setting of 
\cite{Nash:2000,Gratton:2008, WenGoldfarb:2009}, that includes trust-region  and
line search optimization.
The first multilevel optimization  method that covers the nonsmooth convex composite case was introduced in \cite{Parpas:2017}. It is a multilevel version of the well-known proximal gradient method. An accelerated variant was proposed in \cite{Parpas:2016_Magma}. Both approaches in \cite{Parpas:2017,Parpas:2016_Magma} can only handle smoothable functions in the sense of \cite{Beck:2012_smoothable_func}. A  multilevel optimization approach for box-constraints (non-smoothable functions) was proposed in \cite{Kocvara:2016}. The design of the coarse model in  \cite{Kocvara:2016} was inspired by the trust-region method in
\cite{Gratton:2008} and the work reported in \cite{Kornhuber93monotonemultigrid},
and was adopted also in \cite{Plier:2021}. 

Results for \textit{non-convex} optimization problems using a multilevel approach are not yet available in the Euclidean setting.
Recently, a geometric multilevel optimization approach has been proposed by \cite{Sutti:2020vn} for the specific case of low-rank matrix manifolds. Our approach differs in that we focus on information geometry for constructing multilevel problem representations,  which should be applicable to various constrained convex programs beyond the case study \eqref{eq:intro-approach} considered in this paper.

Since Nesterov's work \cite{Nesterov:2005aa}, \textit{accelerated} first-order convex optimization has become an active research field \cite{Tseng:2008aa,Salzo:2012aa,Wibisono_2016,Fazylab:2018aa}. Due to the performance of Nesterov's discrete scheme and the work \cite{Su:2016aa}, a major line of research concerns the understanding and generation of accelerated discrete scheme using continuous-time ODEs, where an adaptive `friction term' causes acceleration. In our approach, acceleration is solely achieved by employing a multilevel problem representation. Acceleration of the first-order Riemannian dynamics is an interesting problem of future research.

The use of \textit{geometry} for convex programming has a long history 
\cite{Bayer:1989aa,Nesterov:2002aa,Absil:2008aa}. The commonly adopted interior point geometry is based on Hessian metrics generated by self-concordant barrier functions, due to the provable optimality in connection with Newton-like second-order optimization \cite{Nesterov:1994aa}, see e.g. \cite{Parpas-self} for recent related work. Closer to our work is the comprehensive paper by \cite{Alvarez:2004} where Hessian metrics generated by convex functions of Legendre type are studied for a class of convex programs that include affine subspace constraints. 

In the present paper, we also consider such a Riemannian metric on a simpler structured bounded open convex feasible set, in order to focus on multilevel representation and accelerated first-order optimization that copes with large problem sizes. This necessitates, in particular, to devise restriction and prolongation operators not only for points on the manifold but also for tangent vectors. To this end, we employ information geometry in order to design smooth nonlinear mappings based on geometric averaging that can be efficiently computed in closed form. Numerical experiments demonstrate that our method outperforms the recent state-of-the-art method \cite{Hanzely:2021vc}.

\subsection{Contribution, Organization}\label{sec:contribution}

We introduce basic notation and briefly recall required properties of Bregman divergences in Section \ref{sec:preliminaries}. Section \ref{sec:two-level-Euclidean-optimization} summarizes the basic scheme of two-level optimization in Euclidean spaces. The core part of this paper, Section \ref{sec:Geometric-Optimization}, generalizes this scheme to a Riemannian setting. In particular, information geometry is employed in a way that indicates how other convex programs could be handled in the same way, in principle. While we indicate multilevel extensions in Sections \ref{sec:multilevel-Euclidean} and \ref{sec:multilevel-geometric}, respectively, we mainly focus on the core problem in this paper, that is two-level geometric optimization. Numerical experiments are reported in Section \ref{sec:Experiments} for a range of problem instances and compared to a recent state-of-the-art method \cite{Hanzely:2021vc}. We conclude in Section \ref{sec:conclusions}.

% !TEX root =  ../SI-Bregman.tex
%%%%%%%%%%%%%%%%%%%%%%%%%%%%%%%

\section{Preliminaries}\label{sec:preliminaries}
\subsection{Notation}\label{sec:Notation}
We set $[n]=\{1,2,\dotsc,n\}$ for $n\in\N$ and
\begin{subequations}\label{eq:def-B-Bn}
\begin{align}
    \mc{B} &= (0,1), 
    \label{eq:def-B} \\ \label{eq:def-Bn}
    \mc{B}_{n} &= \mc{B}\times\dotsb\times\mc{B} \subset\R_{++}^{n}.\qquad (\text{open $n$-box})
\end{align}
\end{subequations}
$\eins = (1,1,\dotsc,1)^{\T}$ denotes the one-vector with dimension depending on the context. Sometimes we indicate the dimension by a subscript, e.g.~$\eins_{n}\in\R^{n}$. 
The componentwise application of functions like $\log$ is simply denoted as
\begin{equation}
    \log y = (\log y_{1},\dotsc,\log y_{n})^{\T},\quad y\in\R_{++}^{n}.
\end{equation}
Likewise, the componentwise multiplication (Hadamard product) and division of vectors $y, y'$ is written as
\begin{equation}
    y\cdot y' = (y_{1} y_{1}',\dotsc, y_{n}y_{n}')^{\T},\qquad
    \frac{y}{y'} = \Big(\frac{y_{1}}{y_{1}'},\dotsc,\frac{y_{n}}{y_{n}'}\Big)^{\T}.
\end{equation}
The Euclidean vector inner product is denoted by $\la y, y'\ra = y^{\T} y'$. 
$\partial f$ denotes the gradient of a differentiable function $f\colon \R^{n}\to\R$, whereas $\ggrad f$ denotes the Riemannian gradient of a function $f\colon\mc{M}\to\R$ on a manifold $(\mc{M},g)$ with metric $g$.

Components $y_{j}$ of vectors $y\in\mc{B}_{n}$ are indexed by $[n]$ that we specifically denote in this context by
\begin{equation}
    I_{n}=[n].\qquad (\text{$n$-grid})
\end{equation}
The notion ``$n$-grid'' reminds of the fact that indices $j$ typically refer to locations in an underlying physical domain (cf.~Section \ref{sec:Introduction}). In this sense, we will synonymously refer to $j$ as `point' and `index'.
A disjoint partition
\begin{equation}
    I_{n} = I_{m}\dot\cup I_{m}^{c},\quad m<n
\end{equation}
defines a coarse grid $I_{m}$ indexing coarse grid vectors $x\in\mc{B}_{m}$ and fine grid points indexed by the complement $I_{m}^{c}$. $I_{n}$ is called fine grid and vectors $y\in\mc{B}_{n}$ are called fine grid vectors.

Regarding problem \eqref{eq:intro-approach}, we summarize our assumptions. 
\begin{equation}\tag{A}
\begin{aligned}
b_{i} &> 0,\qquad i=1,\dotsc,p 
&\qquad&\text{(1)} \\
A_{ij} &\geq 0,\qquad i=1,\dotsc,p,\; j=1,\dotsc,n 
&&\text{(2)} \\
\forall y > 0\colon\quad \supp(A y) &= p,
&&\text{(3)} \\
\rank A &= \min\{p,n\},
&&\text{(4)}
\end{aligned}
\end{equation}
where $\supp(y)$ denotes the number of nonzero components of a vector $y$.

\subsection{Bregman Divergences}

\begin{definition}[Bregman divergence {\cite{Bregman:1967tn}, \cite[Section 2]{Censor:1981vy}, \cite[Def.~7.6.1]{Borwein:2010aa}}]\label{def:Bregman}
Let $E$ be a Euclidean space and $\phi \colon E\to (-\infty,+\infty]$ be a convex function of Legendre type with non-empty domain $\dom \phi \subseteq E$ \cite[Def.~7.1.1]{Borwein:2010aa}. The Bregman divergence corresponding to $\phi$ is defined by
\begin{equation}\label{eq:def-Df}
    D_{\phi} \colon E\times \intr\dom \phi \to[0,\infty]\colon \qquad
    (x,y)\mapsto \phi(x)-\phi(y)-\la \partial \phi(y),x-y\ra.
\end{equation}
\end{definition}
Bregman divergences have the properties $D_{\phi}(x,y)\geq 0$ and $D_{\phi}(x,y)=0$ if and only if $x=y$. 
A basic example is the Kullback-Leibler (KL) divergence
\begin{subequations}\label{eq:def-KL}
\begin{align}
    D_{\vphi}(y,y') &= \Big\la\eins,y \cdot \log\frac{y}{y'} + y'-y\Big\ra
\intertext{corresponding to the function}
\vphi(y) &= \la y,\log y\ra - \la\eins,y\ra,\quad y\in\R_{+}^{n}.
\end{align}
\end{subequations}
%%%
\begin{lemma}[{\cite[Lem 3.1]{Chen_Teboulle_1993}}]\label{Pythagoras}
Let $S \subset \R^n$ be an open set with closure $\ol{S}$, and let $\phi \colon\ol{S}\to \R$
be a convex function as in Definition \ref{def:Bregman}. Then, for any
three points $a, b \in S$ and $c\in\ol{S}$, the identity
\begin{equation}
D_{\phi}(c, a) + D_{\phi}(a, b) - D_{\phi}(c, b)
= \la \partial \phi(b) -  \partial \phi(a), c - a \ra
\end{equation}
holds.
\end{lemma}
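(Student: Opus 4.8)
The plan is to expand all three Bregman divergences appearing on the left-hand side directly from the definition \eqref{eq:def-Df} and then collect terms. Concretely, write

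$D_f(c,a) = f(c) - f(a) - \la \partial f(a), c - a\ra$,

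$D_f(a,b) = f(a) - f(b) - \la \partial f(b), a - b\ra$,

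$D_f(c,b) = f(c) - f(b) - \la \partial f(b), c - b\ra$,

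all of which are legitimate since $a, b \in S = \intr \dom f$ (so $\partial f(a)$ and $\partial f(b)$ exist) and $c \in \ol S = \dom f$ (so $f(c)$ is finite). Forming the combination $D_f(c,a) + D_f(a,b) - D_f(c,b)$, the function values cancel completely: $f(c) + f(a) - f(c) = f(a)$ from the first slots, and $-f(a) - f(b) + f(b) = -f(a)$ from the second slots, so all $f$-terms sum to zero.

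What remains is the linear part, namely $-\la \partial f(a), c-a\ra - \la \partial f(b), a-b\ra + \la \partial f(b), c-b\ra$. Grouping the two terms containing $\partial f(b)$ and using linearity of the inner product in the second argument gives $\la \partial f(b), (c-b) - (a-b)\ra = \la \partial f(b), c-a\ra$. Adding the surviving $\partial f(a)$ term yields $\la \partial f(b), c-a\ra - \la \partial f(a), c-a\ra = \la \partial f(b) - \partial f(a), c-a\ra$, which is exactly the claimed right-hand side.

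This is a purely algebraic identity, so there is no real obstacle: the only point requiring care is bookkeeping — making sure each divergence is evaluated with its arguments in the correct order (the first argument may lie on the boundary, the second must lie in the interior) so that the Legendre-type/differentiability hypotheses are met and every expression is well defined. In particular one should note that $D_f(c,a)$ and $D_f(c,b)$ are finite-or-$+\infty$ a priori, but since all $f$-values cancel the identity holds as stated whenever the individual terms are finite; the hypothesis $c \in \ol S$ together with $f$ being proper and, say, continuous on its domain (part of the Legendre-type assumption via closedness) ensures $f(c) < \infty$, so no indeterminate $\infty - \infty$ arises. I would present the computation in a single short display and close with the observation that linearity of $\la \cdot,\cdot\ra$ is all that was used.
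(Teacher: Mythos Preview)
Your proposal is correct and is exactly the standard direct expansion-and-cancellation argument; the paper does not supply its own proof of this lemma but simply cites \cite[Lem.~3.1]{Chen_Teboulle_1993}, where the identity is verified in precisely the way you outline. There is nothing to add beyond the bookkeeping remarks you already made.
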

Now we consider the data term of \eqref{eq:intro-approach}
\begin{equation}\label{eq:hBreg}
h(y):=D_\varphi(Ay,b)
\end{equation}
with the KL-divergence $D_{\vphi}$ given by \eqref{eq:def-KL} and gradient
\begin{equation}\label{eq:nabla_hBreg}
\partial h(y)= A^\top \log\frac{Ay}{b}.
\end{equation}
%%%
\begin{lemma}\label{Bregmangap_data}
Assume $b\in \R_{++}^{p}$, $y\in\R_{+}^{n}$,  $y'\in\R_{++}^{n}$ and $A y'\in \R_{++}^{p}$. Then the Bregman divergence corresponding to the function $h$ given by \eqref{eq:hBreg} reads
\begin{equation} \label{Breg_gap_withoutb}
D_h(y,y') = D_\varphi(Ay, Ay').   
\end{equation}
\end{lemma}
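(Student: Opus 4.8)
\textit{Proof plan.}
The plan is to obtain \eqref{Breg_gap_withoutb} directly from the defining formula \eqref{eq:def-Df}, read as $D_h(y,y') = h(y) - h(y') - \la \partial h(y'), y-y'\ra$, and to apply the three-point identity of Lemma \ref{Pythagoras} to the Legendre function $\varphi$ rather than to $h$ itself. This is convenient because \eqref{Breg_gap_withoutb} is a purely algebraic statement about that expression, so nothing beyond differentiability of $h$ is needed.

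First I would record the two elementary facts used below. From \eqref{eq:def-Breg-inducing-func} one has $\partial\varphi(z) = \log z$ on $\R_{++}^{p}$, and from \eqref{eq:nabla_hBreg} one has $\partial h(y') = A^{\top}\log\frac{Ay'}{b} = A^{\top}\bigl(\partial\varphi(Ay') - \partial\varphi(b)\bigr)$. The standing hypotheses $b\in\R_{++}^{p}$ and $Ay, Ay'\in\R_{++}^{p}$ (for $y,y'\in\R_{++}^{n}$) ensure that every Bregman divergence appearing is evaluated at admissible, interior arguments, and that $\varphi$ restricted to $S=\R_{++}^{p}$ is of Legendre type, so that Lemma \ref{Pythagoras} is applicable.

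Next I would apply Lemma \ref{Pythagoras} with $f=\varphi$, $S=\R_{++}^{p}$ and the three points "$c$"$=Ay$, "$a$"$=Ay'$, "$b$"$=b$, which yields
\[
D_\varphi(Ay,Ay') + D_\varphi(Ay',b) - D_\varphi(Ay,b) = \bigl\la \partial\varphi(b) - \partial\varphi(Ay'),\, Ay - Ay'\bigr\ra .
\]
Using $h(y)=D_\varphi(Ay,b)$ and rearranging gives
\[
h(y) - h(y') = D_\varphi(Ay,Ay') + \bigl\la \partial\varphi(Ay') - \partial\varphi(b),\, A(y-y')\bigr\ra ,
\]
and the adjoint relation $\la v, A(y-y')\ra = \la A^{\top}v, y-y'\ra$ together with $A^{\top}\bigl(\partial\varphi(Ay') - \partial\varphi(b)\bigr) = \partial h(y')$ converts the last term into $\la \partial h(y'), y-y'\ra$. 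Hence $h(y) - h(y') - \la \partial h(y'), y-y'\ra = D_\varphi(Ay,Ay')$, which is \eqref{Breg_gap_withoutb} in view of \eqref{eq:def-Df}.

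There is no genuinely hard step; the only care needed is to match the variable names in Lemma \ref{Pythagoras} to the choice above and to note that the positivity assumptions place all arguments in the interior of $\dom\varphi$ so the divergences are finite. As a sanity check, \eqref{Breg_gap_withoutb} can equally be verified by brute force: substituting the explicit KL expression \eqref{eq:def-KL} for $D_\varphi(Ay,b)$ and $D_\varphi(Ay',b)$ and the gradient \eqref{eq:nabla_hBreg} into $h(y) - h(y') - \la\partial h(y'),y-y'\ra$, all terms involving $\log b$ cancel and one is left with $\bigl\la\eins,\, Ay\cdot\log\frac{Ay}{Ay'} + Ay' - Ay\bigr\ra = D_\varphi(Ay,Ay')$.
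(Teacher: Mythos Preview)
Your proof is correct and follows essentially the same route as the paper: both expand $D_h(y,y')$ via the definition \eqref{eq:def-Df}, invoke the three-point identity of Lemma \ref{Pythagoras} with the identical substitution $f\leftarrow\varphi$, $c\leftarrow Ay$, $a\leftarrow Ay'$, $b\leftarrow b$, and compare the two resulting expressions. Your write-up is slightly more explicit about the adjoint step $\la A^{\top}v,\,y-y'\ra=\la v,\,A(y-y')\ra$, which the paper absorbs directly into the passage from \eqref{eq:def-Df} to the form involving $\la\partial\varphi(Ay')-\partial\varphi(b),\,Ay-Ay'\ra$.
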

\begin{proof}
By definition \eqref{eq:def-Df}, we have
%\begin{equation}%\label{eq:proof-Dh}
\begin{align*}
D_h(y,y') & = h(y) -h(y') - \la \partial h(y'), y-y' \ra\\
 & \overset{\eqref{eq:hBreg}}{=} D_\varphi(Ay,b)  - D_\varphi(Ay',b) -  \la \partial\varphi (Ay') -\partial\varphi(b), Ay - Ay' \ra. \numberthis  \label{eq:proof-Dh} 
\end{align*}
%\end{equation}
Substituting $\phi \leftarrow\vphi$, $b\leftarrow b$, $a\leftarrow Ay'$ and $c\leftarrow Ay$ in Lemma \ref{Pythagoras} yields
\begin{equation*}
    D_\varphi(Ay,Ay') + D_\varphi(Ay',b) - D_\varphi(Ay,b)
    = \la \partial\varphi(b) - \partial\varphi (Ay') , Ay - Ay' \ra.
\end{equation*}
Comparing this equation and \eqref{eq:proof-Dh} is equivalent to \eqref{Breg_gap_withoutb}.
\end{proof}

% !TEX root =  ../SI-Bregman.tex
%%%%%%%%%%%%%%%%%%%%%%%%%%%%%%%

\section{Two-Grid Euclidean Optimization}\label{sec:two-level-Euclidean-optimization}

In this section, we summarize prior work on two-level optimization in Euclidean spaces \cite{Nash:2000,WenGoldfarb:2009}. Our main contribution in this paper, the generalization to a Riemannian setting, is described in Section \ref{sec:Geometric-Optimization}.

\subsection{Unconstrained Two-Grid Euclidean Optimization}
As motivated in Section \ref{sec:Introduction}, we assume a generic objective function $f$ to be given that can be evaluated at different discretization levels. In this section, we consider two discretization levels called fine grid and coarse grid, respectively, and use the following notation. 
\begin{subequations}\label{eq:notation-Euclidean-twolevel}
\begin{align}
&\text{fine grid variable}\colon y \in\R^n  , 
&
&\text{current iterate}\colon y_0\in\R^n, \\
&\text{coarse grid variable}\colon x\in \R^m , 
&
&\text{restriction of $y_0$} \colon x_0 = Ry_0\in \R^m.
\end{align}
\end{subequations}

Here, the restriction map $R$ and its counterpart, the prolongation map $P$,
\begin{subequations}\label{eq:R-P-Euclidean}
\begin{align}
    R&\colon \R^{m}\to\R^{n}, \\
    P&\colon \R^{n}\to\R^{m},
\end{align}
\end{subequations}
transfer fine and coarse grid variables to the coarse and fine grid, respectively, typically via linear interpolation or simple injection as in classical multigrid methods \cite{Trottenberg:2001}.

We assume that the \textit{argument determines} if the objective function $f$ is evaluated on the fine grid or the coarse grid, respectively. Thus, in view of \eqref{eq:notation-Euclidean-twolevel}, $f(y)$ implies $f\colon\R^{n}\to\R$ whereas $f(x)$ implies $f\colon\R^{m}\to\R$, with $m<n$.

We next describe a two grid cycle that computes an update $y_1$ of $y_{0}$ at a fine grid. This is done either along a search direction efficiently obtained from a surrogate objective function  defined on the coarse grid using a much smaller number of variables: \emph{coarse correction}. Alternatively, whenever a
coarse correction is not effective, the update is determined using a standard local objective function approximation defined on the fine grid: \emph{fine correction}.

The general two grid approach is summarized as Algorithm \ref{alg:twogrid-Euclidean}. It uses a representation $\psi$ of the objective function, specified in Section \ref{sec:coarse-model-Euclidean}, that is \textit{not} equal to the evaluation of the objective $f\colon\R^{m}\to\R$ on the coarse grid.
\begin{algorithm}[ht]
\DontPrintSemicolon
\LinesNumbered
\SetKwInOut{Input}{input}
\SetKwInOut{Output}{output}
\Input{initial point $y_{0}$, mappings $R, P$ (cf.~\eqref{eq:R-P-Euclidean}, coarse grid model $\psi$ (cf.~\eqref{eq:def-psi-Euclidean}).}
\Output{approximate global minimizer $y^{\ast}$.}
Initialization: $k=0$ \;
\Repeat{a stopping rule is met.}{
  % At a current iterate $y$, check \;
  \If{condition to invoke the coarse model is satisfied at $y_0$ \label{if-cond-coarse}}{
    set $x_0=R y_0$ \label{MLO_general_line4} \;
    determine a coarse grid iterate $x$ such that ${\psi(x)} < f(x_0)$ \label{MLO_general_line6} \;
    set $d=P (x-x_0)$ \label{MLO_general_line7} \;
    find  $\alpha>0$ such that $f(y_0+\alpha d) < f(y_0)$ \label{MLO_general_line8} \tcc*{line search} 
    update $y_0 \leftarrow y_0 + \alpha d$ \label{MLO_general_line9}\; 
  }\Else{
  %Apply one iteration of the monotone fine level algorithm  and update
  %$x \leftarrow x+ \alpha d$.  \label{MLO_general_line11}\;
  apply one iteration of a monotone fine level algorithm to find $y_1$ with $f(y_1)< f(y_0)$ and update
  $y_0 \leftarrow y_1$  \label{MLO_general_line10}\;
  }
  increment $k \leftarrow k+1$ \;
  }
\caption{Two Level Euclidean Optimization}\label{alg:twogrid-Euclidean}
\end{algorithm}

The condition in line \ref{if-cond-coarse} is specified in Section \ref{sec:invoke-coarse-model}.
The role of the coarse grid model $\psi$ is further discussed in the following section.

\subsubsection{Coarse Grid Model}\label{sec:coarse-model-Euclidean}
The core ingredient of two level optimization is a \emph{coarse grid model}, that is a coarse grid  representation of the fine grid problem in terms of the objective function $f$ evaluated at the coarse grid, the current iterate $y_0$ and its restriction $x_{0}=Ry_{0}$ to the coarse grid. It reads
\begin{subequations}\label{eq:def-psi-Euclidean}
\begin{align}
    \psi(x)
    &=f(x)- \la x-x_{0},\kappa(x_{0},y_{0})\ra,  \label{eq:def-psi-Euclidean-b}\\
    \intertext{with the linear correction term }
    \kappa(x_{0},y_{0})  & = \partial f(x_{0})- R \partial f({y}_0).
\end{align}
\end{subequations}
The rationale behind the coarse grid model is to determine efficiently a gradient-like descent direction on the \textit{fine} grid using a much smaller number of variables on the coarse grid, as first proposed in \cite{Nash:2000}.
The linear correction term \eqref{eq:def-psi-Euclidean-b} guarantees that a fine grid stationary point is also a coarse grid stationary point. More precisely, the gradient of $\psi$ equals
\begin{subequations}
\begin{align}
   \partial \psi(x) &= \partial f(x) - \kappa(x_{0},y_{0}) \\
   &=  \partial f(x) - 
   \partial f(x_{0})+ R \partial f({y}_0) \\
   &= \partial \left(f(x) - 
   f(x_{0})\right)+ R \partial f({y}_0),
\end{align}
\end{subequations}
and hence the \emph{first-order coherence condition} \cite{Nash:2000}
\begin{equation}\label{eq:first-order-coherence}
   \partial \psi(x_0)  =  R \partial f({y}_0),
\end{equation}
holds at $x=x_0$.

\subsubsection{Descent Directions Based on the Coarse Grid Model}\label{sec:Coarse-descent-direction}
We next show that a sufficient decrease of $\psi$ given by \eqref{eq:def-psi-Euclidean}
by updating the smaller number of coarse grid variables $x$ determines a descent direction for the fine grid objective. To this end, we
rewrite  $\psi$ in terms of the Bregman divergence corresponding to the coarse grid objective $f$,
\begin{subequations}
\begin{align}
    \psi(x)
    &=f(x)- \la x-x_{0},\mc{\kappa}(x_0,y_0)\ra
    \\
    &=f(x)- \la x-x_{0},\partial f(x_{0})- R \partial f({y}_0)\ra
    \\
    &=f(x)- f(x_0) - \la x-x_{0},\partial f(x_{0})- R \partial f({y}_0)\ra + f(x_0)\\
    &=D_f(x,x_0) + \la x-x_{0}, R \partial f({y}_0)\ra + f(x_0).
\end{align}
\end{subequations}
Now, assuming $R$ and $P$ satisfy the Galerkin condition 
\begin{equation}\label{eq:Galerkin}
    R=P^\T
\end{equation}
that is a standard condition in multigrid literature \cite{Trottenberg:2001}, we get
\begin{equation}\label{def:psi-Bregman}
\psi(x)
=D_f(x,x_0) + \la P(x-x_{0}), \partial f({y}_0)\ra + f(x_0).
\end{equation}
Hence, whenever
\begin{equation}\label{eq:psi_decrease}
\psi(x) < f(x_0) = \psi(x_0)
\end{equation}
at a coarse iterate $x$ and the coarse objective $f$ is convex, then the
vector 
\begin{equation}\label{eq:coarse_correction}
d =P(x-x_{0})
\end{equation}
is a descent direction of the fine grid objective $f$, since \eqref{eq:psi_decrease} and $D_{f}\geq 0$ implies 
\begin{equation}
   \la P(x-x_{0}), \partial f({y}_0)\ra <0
\end{equation}
by \eqref{def:psi-Bregman}.
To find such a coarse grid iterate $x$ in practice, one typically employs few iterations of a monotone algorithm for minimizing $\psi$ at the coarse level.

\begin{remark}\label{rem:psi-Df} The coarse grid model rewritten in the form \eqref{def:psi-Bregman} reveals that it
incorporates both 
second-order information of the coarse grid objective  (first term) and  first-order information of the fine grid objective (second term). 
Indeed, ignoring the constant $f(x_{0})$ and assuming $f$ is twice differentiable, the first term can be equivalently rewritten as

\begin{equation}
D_{{f}}(x , x_0) =
 \frac{1}{2}\left \langle x-x_0,\partial^{2}{f}(\wt x) ( x-x_0)\right \rangle 
\end{equation}

for some $\wt x \in \{(1-t) x+t x_0 \colon t\in[0,1]\}$, where $\partial^{2}f(\wt x)$ denotes the Hessian of $f$ at $\wt x$.
\end{remark}

\subsubsection{Coarse Grid Correction Criteria}\label{sec:invoke-coarse-model}
We employ the coarse grid model whenever the criteria
\begin{equation}\label{coarse_condition}
\|R \partial f(y_0)\| \ge \eta \|\partial f(y_0)\|\quad \text{and}\quad \|y_0 - y_c\|> \varepsilon
\end{equation}
are satisfied, where $\eta \in(0,\min(1,\|R\|))$, $\varepsilon \in (0,1)$ and $y_c$ is the last point that initiated a coarse grid correction.

\begin{enumerate}[(1)]
    \item The first condition in \eqref{coarse_condition} is adopted from \cite{WenGoldfarb:2009} and is widely used in the multilevel optimization literature.
    We discuss this first condition in view of the first-order coherence condition \eqref{eq:first-order-coherence}.
    
    If the left-hand side $\|R\partial f(y_{0})\|$ is too small at the current iterate $y_{0}$, then there is nothing to optimize on the coarse grid -- as $\|\partial \psi(x_{0})\|$ is small too. The entire first condition says that $y_{0}$ is non-optimal in a twofold sense: $\|\partial f(y_{0})\|>0$ (otherwise both sides vanish) and on a coarser scale $f$ is sufficiently non-flat as well such that invoking the coarse grid model pays off. 
    \item Violation of the second condition says that the current iterate $y_{0}$ is too close to a point that already initiated a coarse grid correction.
\end{enumerate}

\subsubsection{Application to a Regularized Inverse Problem} 

In order to apply two-level optimization to the regularized inverse problem \eqref{eq:intro-approach}, the coarse grid model function $\psi$ given by \eqref{def:psi-Bregman} has to be computed. Similar to the evaluation of the objective function at both levels, we assume that the operator $A$ can be directly evaluated at both levels. For example, in discrete tomography, the projection matrix $A$ represents the incidence relation of projection rays and cells centered at the grid points, and can be evaluated on every grid. 
 
If suffices to consider the Bregman divergence since the remaining term only involves fine grid variables (we ignore the constant $f(x_{0})$). 
 
 \begin{lemma} \label{lem:data_term_drops} The Bregman divergence corresponding to the objective function \eqref{eq:intro-approach}, evaluated at the coarse level, reads
 \begin{align}
 D_{f}( x , x_0) = D_\varphi(Ax  , Ax_0) + \lambda D_{J}( x , x_0).
 \end{align}
\end{lemma}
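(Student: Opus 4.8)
The plan is to unwind the definition of the Bregman divergence $D_f$ for the composite objective $f(y) = D_\varphi(Ay,b) + J(y) = h(y) + \lambda \cdot \text{(reg.)}$, evaluated on the coarse grid, and to observe that the Bregman divergence is additive with respect to this decomposition. Concretely, writing $f = h + \lambda J$ where $h(x) = D_\varphi(Ax,b)$ is the data term, I would first record the elementary fact that for any two functions $f_1, f_2$ of Legendre type (with suitable domains) and any scalar $\lambda \ge 0$, one has $D_{f_1 + \lambda f_2} = D_{f_1} + \lambda D_{f_2}$, since both $f \mapsto f$ and $f \mapsto \partial f$ are linear and the definition \eqref{eq:def-Df} is affine in $f$. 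Applying this with $f_1 = h$ and $f_2 = J$ gives $D_f(x,x_0) = D_h(x,x_0) + \lambda D_J(x,x_0)$.

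The second and main step is to identify $D_h(x,x_0) = D_\varphi(Ax, Ax_0)$, which is exactly the content of Lemma \ref{Bregmangap_data}: one invokes that lemma with the coarse-level operator $A$ in place of the fine-level one, using assumption (A)(1)--(2) to guarantee $Ax \in \R_{++}^p$ whenever $x \in \R_{++}^m$ (more precisely $x \in \mc{B}_m$), so that the hypotheses $b \in \R_{++}^p$ and $Ax \in \R_{++}$ are met. Substituting this into the additive decomposition yields
\begin{equation}
D_f(x,x_0) = D_\varphi(Ax, Ax_0) + \lambda D_J(x,x_0),
\end{equation}
which is the claim. A minor point worth spelling out is that the convention ``argument determines the level'' means $A$, $h$, $J$, and $f$ here all denote their coarse-grid instances, so the lemma is being applied entirely within the coarse-grid space $\R^m$.

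I expect the only genuine obstacle to be bookkeeping rather than mathematics: one must be careful that $J$ (the smoothed total variation regularizer) is itself smooth and convex enough for $D_J$ to be well-defined in the sense of Definition \ref{def:Bregman}, and that the evaluation points $x, x_0$ lie in the interior of the relevant domains; since $J$ is assumed smooth on all of $\R^m$ and $x_0 = Ry_0 \in \mc{B}_m$ with $x$ chosen in $\mc{B}_m$ as well, this is automatic. One should also note that the data term $h$ need not be of Legendre type on all of $\R^m$, but Lemma \ref{Bregmangap_data} only uses the explicit gradient formula \eqref{eq:nabla_hBreg} together with Lemma \ref{Pythagoras}, so the derivation goes through verbatim. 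Hence the proof is essentially a two-line combination of linearity of $D_{(\cdot)}$ in its generating function and Lemma \ref{Bregmangap_data}.
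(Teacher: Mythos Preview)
Your proposal is correct and follows essentially the same approach as the paper's own proof: invoke Lemma~\ref{Bregmangap_data} to get $D_h(x,x_0)=D_\varphi(Ax,Ax_0)$, then use linearity of the Bregman divergence in its inducing function to obtain $D_{h+\lambda J}=D_h+\lambda D_J$. The additional remarks you make about domains and smoothness of $J$ are reasonable sanity checks but are not needed for the argument as stated in the paper.
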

\begin{proof} Lemma \ref{Bregmangap_data} shows $D_h( x , x_0) = D_\varphi(A x  , A x_0)$ with $h$ given by \eqref{eq:hBreg}. The result follows from linearity of the Bregman divergence with respect to the inducing function, i.e.~$D_{h+\lambda J}(x,x_0)=D_{h}(x,x_0)+\lambda D_{J}(x,x_0)$.
\end{proof}

\begin{remark} 
Lemma \ref{lem:data_term_drops} 
shows, in particular, that the specific ray geometry (depicted in Fig. \ref{fig:find-coarse-A}) at the coarse level can be selected independently of the ray geometry at the fine level, since the projection data $b$ drop out and hence have not to be transferred between levels.
\end{remark}

%\subsection{Two-Grid Bound Constrained Optimization in Euclidean Space}
%ToDo

\subsection{Multilevel Extension}\label{sec:multilevel-Euclidean}

We introduce the notation
\begin{subequations}
\begin{align}
    x^{(\ell)} &\in \R^{n_{\ell}}
    \intertext{for the variables at levels $\ell\in\{0,1,2,\dotsc\}$, where} 
    x^{(0)} := x &\in \R^{n} =: \R^{n_{0}}
\end{align}
are the original fine grid variables. The components $x^{(\ell)}_{i}$ of these vectors are indexed by the nested index sets
\begin{equation}
    i\in I_{n_{\ell}} \subset I_{n_{\ell-1}} \subset \dotsb \subset I_{n} = I_{n_{0}}.
\end{equation}
\end{subequations}
Accordingly, the coarse grid model \eqref{eq:def-psi-Euclidean} applied at level $\ell$ reads
\begin{subequations}
\begin{align}
    \psi(x^{(\ell)}) &= f(x^{(\ell)})-\la x^{(\ell)}-x^{(\ell)}_{0},\kappa(x_{0}^{(\ell)},x_{0}^{(0)})\ra,
    \\
    \kappa(x_{0}^{(\ell)},x_{0}^{(0)}) &= \partial f(x_{0}^{(\ell)}) - R^{(0)}_{(\ell)}\partial f(x_{0}^{(0)}),
\end{align}
\end{subequations}
with $x_{0}^{(\ell)} = R^{(0)}_{(\ell)} x_{0}^{(0)}$.
Substituting $\kappa(x_{0}^{(\ell)},x_{0}^{(0)})$ into $\psi(x^{(\ell)})$, we have
%\begin{subequations}
\begin{align*}
    \psi(x^{(\ell)})-f(x^{(\ell)}_{0}) 
     = f(x^{(\ell)})-f(x^{(\ell)}_{0})-\big\la x^{(\ell)}-x^{(\ell)}_{0},\partial f(x_{0}^{(\ell)}) - R^{(0)}_{(\ell)}\partial f(x_{0}^{(0)})\big\ra
    \\ \label{eq:psi-ell}
    = D_f(x^{(\ell)},x^{(\ell)}_{0}) +\big\la P_{(0)}^{(\ell)}(x^{(\ell)}-x^{(\ell)}_{0}), \partial f(x_{0}^{(0)})\big\ra 
    \numberthis 
    \\
    \overset{\text{Lem.}~\ref{Bregmangap_data}}{=} D_\varphi(A^{(\ell)}x^{(\ell)},A^{(\ell)}x^{(\ell)}_{0}) + \lambda D_{J}(x^{(\ell)},x^{(\ell)}_{0}) +
    \big\la P_{(0)}^{(\ell)}(x^{(\ell)}-x^{(\ell)}_{0}), \partial f(x_{0}^{(0)})\big\ra,
\end{align*}
%\end{subequations}
% \begin{subequations}
% \begin{align}
%     \psi(x^{(\ell)})&-f(x^{(\ell)}_{0}) 
%      = f(x^{(\ell)})-f(x^{(\ell)}_{0})-\big\la x^{(\ell)}-x^{(\ell)}_{0},\partial f(x_{0}^{(\ell)}) - R^{(0)}_{(\ell)}\partial f(x_{0}^{(0)})\big\ra
%     \\ \label{eq:psi-ell}
%     &= D_f(x^{(\ell)},x^{(\ell)}_{0}) +\big\la P_{(0)}^{(\ell)}(x^{(\ell)}-x^{(\ell)}_{0}), \partial f(x_{0}^{(0)})\big\ra 
%     \\
%     \overset{\text{Lem.}~\ref{Bregmangap_data}}{=} & D_\varphi(A^{(\ell)}x^{(\ell)},A^{(\ell)}x^{(\ell)}_{0}) + \lambda D_{J}(x^{(\ell)},x^{(\ell)}_{0}) +
%     \big\la P_{(0)}^{(\ell)}(x^{(\ell)}-x^{(\ell)}_{0}), \partial f(x_{0}^{(0)})\big\ra,
% \end{align}
% \end{subequations}
where $A^{(\ell)}\in R^{{p_{\ell}\times {n_{\ell}}}}$ is the discretization of the linear operator
at level $\ell$, $R_{(\ell)}^{(0)} = (P_{(0)}^{(\ell)})^{\T}$ and $P_{(0)}^{(\ell)}$ prolongates vectors at level $\ell$ to level $0$ by interpolation, e.g.~as through the composition $P_{(0)}^{(\ell)}=P_{(\ell-1)}^{(\ell)}\circ \dotsb\circ P_{(0)}^{(1)}$ of interpolation operators between adjacent levels.

%The gradient of $\psi$ is
%\begin{subequations}
%\begin{align}
%   \partial \psi(x^{(\ell)})& = \partial f(x^{(\ell)}) - \kappa(x_{0}^{(\ell)},x_{0}^{(\ell-1)}) \\
%   &=  \partial f(x^{(\ell)}) - 
%   \partial f(x^{(\ell)}_{0})+ %R^{(\ell-1)}_{(\ell)}\partial %f(x_{0}^{(\ell-1)}),
%\end{align}
%\end{subequations}
%or more explicitly
%\begin{subequations}
%\begin{align}
% \partial  \psi(x^{(\ell)}) & =  \partial D_\varphi(A^{(\ell)}x^{(\ell)},A^{(\ell)}x^{(\ell)}_{0}) + \lambda \partial D_{J}( x^{(\ell)} , x^{(\ell)}_{0})
% + R^{(\ell-1)}_{(\ell)}\partial f(x_{0}^{(\ell-1)})\\
% & = {A^{(\ell)}}^\top (\partial \varphi (A^{(\ell)}x^{(\ell)}) -\partial \varphi(A^{(\ell)}x^{(\ell)}_0)) 
% + \lambda \partial J (x^{(\ell)}) - \lambda \partial J (x^{(\ell)}_{0}) + 
% R^{(\ell-1)}_{(\ell)}\partial f(x_{0}^{(\ell-1)}).
%\end{align}
%\end{subequations}

We note that the data vector $b\in \R^{p_0}:=\R^{p}$
in \eqref{eq:intro-approach} is only required on the finest (original) level. In addition, comparing \eqref{def:psi-Bregman} and \eqref{eq:psi-ell} makes evident that the discussion of \eqref{def:psi-Bregman} applies also to \eqref{eq:psi-ell}: an iterate $x^{(\ell)}$ satisfying $\psi(x^{(\ell)})-f(x^{(\ell)}_{0})<0$ yields a descent direction $P_{(0)}^{(\ell)}(x^{(\ell)}-x^{(\ell)}_{0})$ at level $0$. Clearly, the criterion analogous to \eqref{coarse_condition} (with $R_{(\ell)}^{(0)}$ in place of $R$) 
for employing a coarse correction  will be less likely satisfied
at level $\ell$ than a finer level $\ell'<\ell$.
% for invoking a (very) coarse level $\ell$ with be less likely to be positive than a finer level $\ell'<\ell$. 
On the other hand, if it is satisfied, then determining a descent direction is computationally (very) cheap. 

Optimizing the costs for invoking coarse grid models vs.~saving computations through evaluating these models with much smaller numbers of variables, is left for future work.

% !TEX root =  ../SI-Bregman.tex
%%%%%%%%%%%%%%%%%%%%%%%%%%%%%%%

\section{Multilevel Geometric Optimization}\label{sec:Geometric-Optimization}

\subsection{Riemannian Geometry of the Box}\label{sec:geometry-box}
In this section, we represent the open $n$-box $\mc{B}_{n}$ \eqref{eq:def-Bn} as a Riemannian manifold. To this end, we turn the open intervall \eqref{eq:def-B} into a manifold $(\mc{B},g)$ with metric $g$ and define $(\mc{B}_{n},g)$ as the corresponding product manifold.

In order to specify $(\mc{B},g)$, we apply basic information geometry \cite{Amari:2000aa}. Let points $\eta\in\mc{B}$ parametrize the Bernoulli distribution
\begin{equation}\label{eq:p-Bernoulli}
    p(z;\eta) = \eta^{z}(1-\eta)^{1-z}
\end{equation}
of a binary random variable $Z\in\{0,1\}$. Then the metric tensor of the Fisher-Rao metric is simply a scalar function of $\eta$ given by
\begin{equation}\label{eq:G-B}
    G(\eta)
    = 4\sum_{z\in\{0,1\}}\Big(\frac{d}{d\eta}\sqrt{p(z;\eta)}\Big)^{2}
    = \frac{1}{1-\eta} + \frac{1}{\eta}
    = \frac{1}{\eta(1-\eta)}.
\end{equation}
In view of \eqref{eq:def-Bn}, this naturally extends to the metric $g$ on $T\mc{B}_{n}$ in terms of the diagonal matrix
\begin{equation}
    G_{n}(y) = \Diag\Big(\frac{1}{y_{1}(1-y_{1})},\dotsc,\frac{1}{y_{n}(1-y_{n})}\Big)
\end{equation}
as metric tensor, using again the symbol `$G$' for simplicity. We naturally identify \begin{equation}\label{eq:ident-TB}
    T_{y}\mc{B}_{n}\cong \R^{n},\quad\forall y\in\mc{B}_{n}
\end{equation}
and denote this metric interchangeably by
\begin{equation}\label{eq:metric-notation}
    g_{y}(v,v') = \la v,v'\ra_{y} = \la v,G_{n}(y) v'\ra,\qquad
    \forall v,v'\in T_{y}\mc{B}_{n}.
\end{equation}
%%%%%%%%%%%%%%%%%%%%%%%%%%%%%%%%%%%%%%%%%%%%%

\subsection{Retraction and its Inverse}

Retractions \cite[Def.~4.1.1]{Absil:2008aa} and their inverses are basic ingredients of first-order optimization algorithms on Riemannian manifolds. The main motivation is to replace the exponential map with respect to the metric (Levi Civita) connection by an approximation that can be efficiently evaluated or even in closed form. Below, we compute the exponential map with respect to the e-connection of information geometry \cite{Amari:2000aa} and show subsequently that it is a retraction.
\begin{proposition}\label{prop:exp}
The exponential maps on $\mc{B}$ resp. $\mc{B}_{n}$ with respect to the e-connection are given by
\begin{subequations}\label{eq:def-exp}
\begin{align}
    \exp&\colon\mc{B}\times T\mc{B}\to\mc{B},
    &
    \exp_{\eta}(t v) 
    &= \frac{\eta e^{t \frac{v}{\eta (1-\eta)}}}{1-\eta + \eta e^{t \frac{v}{\eta (1-\eta)}}},\quad t > 0
    \label{eq:exp-B} \\ \label{eq:exp-Bn}
    \exp&\colon\mc{B}_{n}\times T\mc{B}_{n}\to\mc{B}_{n},
    &
    \exp_{y}(t v) 
    &= \big(\exp_{y_{j}}(t v_{j})\big)_{j\in I_{n}}
\end{align}
\end{subequations}
with inverses
\begin{subequations}\label{eq:def-inv-exp}
\begin{align}
    \exp^{-1}&\colon\mc{B}\times\mc{B}\to T\mc{B},
    &
    \exp_{\eta}^{-1}(\eta')
    &= \eta (1-\eta)\log\frac{(1-\eta)\eta'}{\eta (1-\eta')}
    \label{eq:inv-exp-B} \\ \label{eq:inv-exp-Bn}
    \exp^{-1}&\colon\mc{B}_{n}\times\mc{B}_{n} \to  T\mc{B}_{n},
    &
    \exp_{y}^{-1}(y')
    &= \big(\exp_{y_{j}}^{-1}(y_{j}')\big)_{j\in I_{n}}.
\end{align}
\end{subequations}
\end{proposition}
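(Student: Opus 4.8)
The plan is to derive the e-connection geodesic equation on $\mc{B}$ explicitly, solve the resulting ODE, and then verify the formula for the inverse by direct substitution; the product-manifold cases \eqref{eq:exp-Bn} and \eqref{eq:inv-exp-Bn} then follow componentwise because the metric $G_n$ is diagonal and the e-connection splits as a direct product.

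First I would recall that the exponential family \eqref{eq:p-Bernoulli} has natural (canonical) parameter $\theta = \log\frac{\eta}{1-\eta}$, and that in information geometry the e-connection is precisely the flat connection whose affine coordinate is $\theta$. Hence e-geodesics are exactly the curves that are affine in $\theta$, i.e.\ $\theta(t) = \theta_0 + t\,\dot\theta_0$. So the core computation is just a change of variables: express everything through $\theta$, write the straight line, and convert back to $\eta$ via the inverse link $\eta = \frac{e^\theta}{1+e^\theta}$. Concretely, if $\exp_\eta(tv)$ is the e-geodesic with $\exp_\eta(0)=\eta$ and initial velocity $v\in T_\eta\mc B\cong\R$, I need the relation between $v$ and $\dot\theta_0$. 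Since $\theta=\log\eta-\log(1-\eta)$, the chain rule gives $\dot\theta_0 = \big(\tfrac1\eta+\tfrac1{1-\eta}\big)v = \tfrac{v}{\eta(1-\eta)}$. Plugging $\theta(t)=\theta_0+t\tfrac{v}{\eta(1-\eta)}$ into $\eta(t)=\frac{e^{\theta(t)}}{1+e^{\theta(t)}}$ and using $e^{\theta_0}=\frac{\eta}{1-\eta}$ yields, after multiplying numerator and denominator by $1-\eta$, exactly
\begin{equation}
\eta(t) = \frac{\eta\, e^{t\frac{v}{\eta(1-\eta)}}}{1-\eta+\eta\, e^{t\frac{v}{\eta(1-\eta)}}},
\end{equation}
which is \eqref{eq:exp-B}. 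For readers who prefer not to invoke the dual-flat structure as a black box, I would alternatively write the Christoffel symbol of the e-connection in the $\eta$-coordinate directly (the e-connection coefficients in a source coordinate $\eta$ are obtained from the vanishing of the $\theta$-coordinate coefficients by the usual transformation rule, giving $\Gamma^\eta_{\eta\eta} = -\frac{d^2\theta/d\eta^2}{d\theta/d\eta} = \frac{2\eta-1}{\eta(1-\eta)}$), and solve the scalar geodesic ODE $\ddot\eta + \Gamma^\eta_{\eta\eta}\dot\eta^2 = 0$; separating variables in $u:=\dot\eta$ gives $\eta(1-\eta)$ as an integrating factor and reproduces the same closed form.

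For the inverse \eqref{eq:inv-exp-B}, I would simply solve $\exp_\eta(w) = \eta'$ for $w\in T_\eta\mc B$ (i.e.\ set $t=1$, $v=w$): the displayed formula rearranges to $e^{\frac{w}{\eta(1-\eta)}} = \frac{(1-\eta)\eta'}{\eta(1-\eta')}$, hence $w = \eta(1-\eta)\log\frac{(1-\eta)\eta'}{\eta(1-\eta')}$, which is exactly \eqref{eq:inv-exp-B}; one should note the argument of the $\log$ is positive because $\eta,\eta'\in(0,1)$, so the map is well-defined and is a genuine two-sided inverse on $\mc B\times\mc B$. Finally, since $(\mc B_n,g)$ is the Riemannian product of $n$ copies of $(\mc B,g)$ and the e-connection of a product of exponential families is the product of the factor e-connections, geodesics decouple coordinatewise, giving \eqref{eq:exp-Bn} and \eqref{eq:inv-exp-Bn} immediately.

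I do not anticipate a serious obstacle here — the statement is essentially a closed-form ODE solution dressed in geometric language. The only point requiring a little care is pinning down the precise normalization convention for the e-connection (e.g.\ the factor $\tfrac{v}{\eta(1-\eta)}$ in the exponent, which encodes that $v$ is the velocity measured in the original coordinate $\eta$ rather than in $\theta$); getting this right amounts to tracking the Jacobian $d\theta/d\eta = \tfrac1{\eta(1-\eta)}$ consistently through the change of variables, and is exactly what makes the exponent in \eqref{eq:exp-B} and the prefactor $\eta(1-\eta)$ in \eqref{eq:inv-exp-B} appear. A secondary (trivial) check is that $\exp_\eta$ maps into $\mc B=(0,1)$ for all $t$, which is visible from the formula since numerator and denominator are positive and the numerator is strictly smaller.
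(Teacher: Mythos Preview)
Your proposal is correct and follows essentially the same approach as the paper: both exploit that the e-connection is flat in the natural parameter $\theta=\log\frac{\eta}{1-\eta}$, write the e-geodesic as a straight line in $\theta$, compute the Jacobian $d\theta/d\eta=\frac{1}{\eta(1-\eta)}$ to relate the tangent $v$ to $\dot\theta_0$, and then pull the result back to $\eta$-coordinates via the sigmoid; the inverse and the product-manifold extensions are handled identically. Your additional remark about deriving the Christoffel symbol and solving the scalar geodesic ODE directly is a nice alternative that the paper does not include, but it is not needed for the argument.
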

\begin{proof}
It suffices to show \eqref{eq:exp-B} from which \eqref{eq:inv-exp-B} follows from an elementary calculation. \eqref{eq:exp-Bn} and \eqref{eq:inv-exp-Bn} result from the componentwise application of these maps.

A key concept of information geometry is to replace the metric connection by a pair of connections that are dual to each other with respect to the Riemannian metric $g$ \cite[Section 3.1]{Amari:2000aa}. In particular, under suitable assumptions, the parameter space of a probability distribution becomes a Riemannian manifold that is dually flat, i.e.~two distinguished coordinate systems (the so-called m- and e-coordinates) exists with affine geodesics. We consider the simple case $(\mc{B},g)$.

First, we rewrite the distribution \eqref{eq:p-Bernoulli} as distribution of the exponential family \cite{Brown:1986vy}
\begin{subequations}\label{eq:exp-parametrization}
\begin{align}
    p(z;\theta) &= \exp\big(z \theta-\psi(\theta)\big)
    \\ \label{eq:theta-eta}
    \theta &= \theta(\eta) = \log\frac{\eta}{1-\eta},\qquad
    \eta = \eta(\theta) = \frac{e^{\theta}}{1+e^{\theta}}
\end{align}
\end{subequations}
with exponential parameter $\theta$ and log-partition function 
\begin{equation}\label{eq:def-psi}
    \psi(\theta)=\log Z(\theta),\quad
    Z(\theta)=1+e^{\theta}
\end{equation}
that is convex and of Legendre type \cite[Def.~7.1.1]{Borwein:2010aa}. The aforementioned distinguished two coordinates are $\eta$ and $\theta$ with affine geodesics
\begin{subequations}\label{eq:affine-geodesics}
\begin{align}
    t &\mapsto \eta_{v}(t) = \eta + t v \in \mc{B}, 
    \label{eq:m-affine} \\ \label{eq:e-affine}
    t &\mapsto \theta_{u}(t) = \theta + t u \in \R.
\end{align}
\end{subequations}
Note that unlike $\eta, v$, the coordinate $\theta$ and the tangent $u$ are unconstrained. Using \eqref{eq:theta-eta}, the e-geodesic reads
\begin{equation}\label{eq:proof-eta-theta-u}
    \eta\big(\theta_{u}(t)\big) = \frac{e^{\theta_{u}(t)}}{1+e^{\theta_{u}(t)}}
    = \frac{e^{\theta} e^{t u}}{1 + e^{\theta} e^{t u}} \in\mc{B}.
\end{equation}
It remains to replace $\theta$ by $\eta = \eta(\theta)$ due to \eqref{eq:theta-eta} and $u$ by $\eta$ and $v$, respectively. The latter results from substituting the affine geodesic \eqref{eq:m-affine} into $\theta = \theta(\eta)$ and to express $u$ by
\begin{subequations}
\begin{align}
    u = u(\eta,v) &
    = \frac{d}{dt}\theta\big(\eta_{v}(t)\big)\big|_{t=0}
    = \frac{d}{dt}\theta(\eta + t v)\big|_{t=0} \\
    &= \frac{d}{dt}\log\frac{\eta+t v}{1-\eta-t v}\Big|_{t=0}
    = \frac{v}{\eta (1-\eta)}.
\end{align}
\end{subequations}
Substituting into \eqref{eq:proof-eta-theta-u} yields
\begin{equation}
    \exp_{\eta}(t v) 
    := \frac{e^{\theta(\eta)} e^{t u(\eta,v)}}{1 + e^{\theta(\eta)} e^{t u(\eta,v)}}
    = \frac{\eta e^{t \frac{v}{\eta (1-\eta)}}}{1-\eta + \eta e^{t \frac{v}{\eta (1-\eta)}}}
\end{equation}
which is \eqref{eq:exp-B}.
\end{proof}
\begin{remark}[$g$ is a Hessian metric]
The dual nature of the exponential parametrization \eqref{eq:exp-parametrization} is also highlighted by recovering the metric tensor \eqref{eq:G-B} as Hessian metric from the potential $\phi(\eta)$ that is conjugate to the log-partition function \eqref{eq:def-psi},
\begin{equation}\label{eq:def-phi-psi-ast}
    \phi(\eta) = \psi^{\ast}(\eta) 
    = \eta\log\eta + (1-\eta)\log(1-\eta),
\end{equation}
that is
\begin{equation}
        \phi''(\eta) = G(\eta) = \frac{1}{\eta (1-\eta)}.
\end{equation}
The dual coordinate and potential yield the inverse metric tensor
\begin{equation}
    \psi''(\theta) = \frac{e^{\theta}}{(1+e^{\theta})^{2}}
    = \frac{1}{G(\eta)}\bigg|_{\eta=\eta(\theta)}.
\end{equation}
\end{remark}
The following formulas will be used later on.
\begin{lemma}[Differential of $\exp$ and $\exp^{-1}$]\label{eq:dexp-dexp-1}
The differentials of the mappings $\exp_{\eta}$ and $\exp_{\eta}^{-1}$ at $u$ and $\eta'$, respectively, are given by
\begin{subequations}\label{eq:depx-dinvexp}
\begin{align}
    d\exp_{\eta}(u)\colon T_{\eta}\mc{B}&\to T_{\eta'}\mc{B}, 
    \\ \label{eq:dexp} 
    v \mapsto d\exp_{\eta}(u) v
    &= \frac{e^{\frac{u}{\eta (1-\eta)}}}{\big(1-\eta + \eta e^{\frac{u}{\eta (1-\eta)}}\big)^{2}} v
    = \frac{\eta' (1-\eta')}{\eta (1-\eta)} v,\qquad \eta' = \exp_{\eta}(u)
    \\
    d\exp_{\eta}^{-1}(\eta')\colon T_{\eta'}\mc{B}&\to T_{\eta}\mc{B}, 
    \\ \label{eq:dinvexp}
    v' \mapsto d\exp_{\eta}^{-1}(\eta') v'
    &= \frac{\eta (1-\eta)}{\eta' (1-\eta')} v'.
\end{align}
\end{subequations}
\end{lemma}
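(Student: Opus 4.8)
The plan is to differentiate the closed-form expressions for $\exp_{\eta}$ and $\exp_{\eta}^{-1}$ obtained in Proposition~\ref{prop:exp} directly, using only the one-dimensional chain and quotient rules together with the identification $T_{\eta}\mc{B}\cong\R$ from \eqref{eq:ident-TB}. Since both maps act componentwise on $\mc{B}_{n}$, it suffices to treat the scalar case $\mc{B}=(0,1)$, so I only discuss \eqref{eq:dexp} and \eqref{eq:dinvexp}.

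For \eqref{eq:dexp} I fix the base point $\eta$ and regard $u\mapsto\exp_{\eta}(u)$ as a function of the tangent argument. Writing $c=\tfrac{1}{\eta(1-\eta)}$ so that $\exp_{\eta}(u)=\tfrac{\eta e^{cu}}{1-\eta+\eta e^{cu}}$, the quotient rule gives, after cancelling the cross term and using $c\,\eta(1-\eta)=1$,
\[
\frac{d}{du}\exp_{\eta}(u)=\frac{e^{cu}}{\bigl(1-\eta+\eta e^{cu}\bigr)^{2}},
\]
which is the first expression in \eqref{eq:dexp} once multiplied by $v$. To reach the compact form I substitute $\eta'=\exp_{\eta}(u)$ and use the elementary identities $\eta'=\tfrac{\eta e^{cu}}{1-\eta+\eta e^{cu}}$ and $1-\eta'=\tfrac{1-\eta}{1-\eta+\eta e^{cu}}$, whose product is $\eta'(1-\eta')=\tfrac{\eta(1-\eta)e^{cu}}{(1-\eta+\eta e^{cu})^{2}}$; dividing by $\eta(1-\eta)$ matches the displayed derivative.

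For \eqref{eq:dinvexp} I again fix $\eta$ and differentiate $\eta'\mapsto\exp_{\eta}^{-1}(\eta')=\eta(1-\eta)\log\tfrac{(1-\eta)\eta'}{\eta(1-\eta')}$. Splitting the logarithm as $\log(1-\eta)+\log\eta'-\log\eta-\log(1-\eta')$, only the middle two terms depend on $\eta'$, so the derivative of the logarithm is $\tfrac{1}{\eta'}+\tfrac{1}{1-\eta'}=\tfrac{1}{\eta'(1-\eta')}$; multiplying by the constant prefactor $\eta(1-\eta)$ and by $v'$ gives \eqref{eq:dinvexp}.

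There is no genuine obstacle; the computation is routine, and the only care needed is bookkeeping: keeping track that the differential of $\exp_{\eta}$ is taken with respect to the tangent argument with the base point frozen, carrying the factor $c=\tfrac{1}{\eta(1-\eta)}$ correctly through the chain rule, and spotting the algebraic simplification that recasts the explicit derivative in the symmetric form involving $\eta'(1-\eta')$. A useful sanity check is that the two differentials are mutually inverse, $\tfrac{\eta'(1-\eta')}{\eta(1-\eta)}\cdot\tfrac{\eta(1-\eta)}{\eta'(1-\eta')}=1$, as they must be since $\exp_{\eta}$ and $\exp_{\eta}^{-1}$ are inverse diffeomorphisms.
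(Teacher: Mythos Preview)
Your proof is correct and follows essentially the same approach as the paper: direct differentiation of the closed-form expressions from Proposition~\ref{prop:exp}, followed by the observation that the two differentials are mutual inverses. You simply spell out the quotient-rule computation and the algebraic simplification to $\eta'(1-\eta')/\big(\eta(1-\eta)\big)$ in more detail than the paper does.
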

\begin{proof}
By direction computation,
\begin{equation}
    d\exp_{\eta}(u) v = \frac{d}{dt} \exp_{\eta}(u+t v)\big|_{t=0}
\end{equation}
using \eqref{eq:exp-B} and similarly for \eqref{eq:dinvexp} using \eqref{eq:inv-exp-B}. Clearly, $d(\exp_{\eta}^{-1}(\eta')) = (d\exp_{\eta}(u))^{-1}$.
\end{proof}
Retractions provide a proper class of surrogate mappings for replacing the canonical exponential map corresponding to the metric connection.
\begin{proposition}[$\exp$ is a retraction]\label{prop:retraction}
The mapping $\exp\colon T\mc{B}\to \mc{B}$ is a retraction in the sense of \cite[Def.~4.1.1.]{Absil:2008aa}.
\end{proposition}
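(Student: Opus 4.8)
The plan is to verify the two defining properties of a retraction from \cite[Def.~4.1.1]{Absil:2008aa} directly for $\exp$ as given in \eqref{eq:exp-B}. Recall that a map $R\colon T\mc{B}\to\mc{B}$ with $R_{\eta}:=R|_{T_{\eta}\mc{B}}$ is a retraction if (i) $R_{\eta}(0)=\eta$ for every $\eta\in\mc{B}$, and (ii) the local rigidity condition holds, namely that under the canonical identification $T_{0}(T_{\eta}\mc{B})\cong T_{\eta}\mc{B}$ one has $dR_{\eta}(0)=\mathrm{id}_{T_{\eta}\mc{B}}$; one also checks smoothness of $\exp$ as a map on the tangent bundle. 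Since $\mc{B}$ is one-dimensional and $\mc{B}_{n}$ is just the product manifold with the product retraction, it suffices to treat the scalar case, exactly as the proof of Proposition \ref{prop:exp} reduces to \eqref{eq:exp-B}.

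First I would check smoothness: the expression $\exp_{\eta}(v)=\eta e^{v/(\eta(1-\eta))}/(1-\eta+\eta e^{v/(\eta(1-\eta))})$ is a composition of smooth functions on $\mc{B}\times\R$, and the denominator $1-\eta+\eta e^{v/(\eta(1-\eta))}$ is strictly positive for $\eta\in(0,1)$, so $\exp$ is smooth and, since both $\eta$ and $\eta e^{v/(\eta(1-\eta))}$ are positive, takes values in $(0,1)=\mc{B}$; this also uses that $\mc{B}_{n}$ is an open subset of $\R^{n}$ so its tangent bundle is $\mc{B}_{n}\times\R^{n}$ with the obvious smooth structure. Second, the centering condition: setting $t=0$ (equivalently $v=0$) in \eqref{eq:exp-B} gives $\exp_{\eta}(0)=\eta/(1-\eta+\eta)=\eta$, so $R_{\eta}(0)=\eta$. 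Third, the local rigidity condition: I would invoke Lemma \ref{eq:dexp-dexp-1}, which computes $d\exp_{\eta}(u)v=\frac{\eta'(1-\eta')}{\eta(1-\eta)}v$ with $\eta'=\exp_{\eta}(u)$; evaluating at $u=0$ gives $\eta'=\eta$ by the centering property just established, hence $d\exp_{\eta}(0)v=\frac{\eta(1-\eta)}{\eta(1-\eta)}v=v$, i.e.\ $d\exp_{\eta}(0)=\mathrm{id}$. Assembling the scalar statements componentwise via \eqref{eq:exp-Bn} and the product-manifold structure of $(\mc{B}_{n},g)$ then gives that $\exp\colon T\mc{B}_{n}\to\mc{B}_{n}$ is a retraction as well.

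The verification is essentially routine given Proposition \ref{prop:exp} and Lemma \ref{eq:dexp-dexp-1}; the only point deserving care is the identification used in the local rigidity condition. One must be careful that the differential $d\exp_{\eta}(0)$ in Lemma \ref{eq:dexp-dexp-1} is the derivative of $v\mapsto\exp_{\eta}(v)$ at the point $0\in T_{\eta}\mc{B}$, which is precisely $dR_{\eta}(0)$ after the canonical identification $T_{0}(T_{\eta}\mc{B})\cong T_{\eta}\mc{B}\cong\R^{n}$ that is already fixed in \eqref{eq:ident-TB}; under this identification $\mathrm{id}_{T_{\eta}\mc{B}}$ is represented by the identity matrix, matching the computation above. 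I would also remark that, strictly speaking, \eqref{eq:dexp} in Lemma \ref{eq:dexp-dexp-1} is stated for $t$-scaled arguments, but the derivative formula there is exactly $\frac{d}{dt}\exp_{\eta}(u+tv)|_{t=0}$, so no rescaling issue arises. Modulo these bookkeeping remarks, the proof is a short direct check.
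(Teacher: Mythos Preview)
Your proof is correct and follows essentially the same approach as the paper: verify the centering condition $\exp_{\eta}(0)=\eta$ from \eqref{eq:exp-B} and the local rigidity condition $d\exp_{\eta}(0)=\mathrm{id}$ from \eqref{eq:dexp}. You add a smoothness check and the componentwise extension to $\mc{B}_{n}$, which the paper's proof leaves implicit, but the core argument is identical.
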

\begin{proof}
We check the two criteria that characterize retractions. First,
\begin{equation}
    \exp_{\eta}(0) \overset{\eqref{eq:exp-B}}{=} \eta,\qquad 
    \forall \eta\in\mc{B}.
\end{equation}
Second, the so-called local rigidity condition
\begin{equation}
    d\exp_{\eta}(0) \overset{\eqref{eq:dexp}}{=} 1 = \mrm{id}_{T_{\eta}\mc{B}},\qquad 
    \forall \eta\in\mc{B}
\end{equation}
holds as well.
\end{proof}

\begin{remark}[Vector transport]\label{rem:vector-transport}
In the same way as retractions may properly replace the canonical exponential map, the class of vector transport mappings 
\begin{equation}
    \Pi\colon T\mc{B}\times T\mc{B}\to T\mc{B},\qquad
    (u_{\eta},v_{\eta}) \mapsto \Pi_{u_{\eta}}(v_{\eta})
\end{equation}
may properly replace the canonical parallel transport of tangent vectors \cite[Section 8.1]{Absil:2008aa}. Given a retraction, like in Proposition \ref{prop:retraction}, a vector transport map can be conveniently obtained by differentiation \cite[Section 8.1.2]{Absil:2008aa},
\begin{equation}\label{eq:def-Piuv}
    \Pi_{u}(v) = d\exp_{\eta}(u)v
    \overset{\eqref{eq:dexp}}{=} \frac{\eta' (1-\eta')}{\eta (1-\eta)} v,\qquad
    \eta' = \exp_{\eta}(u).
\end{equation}
The above proof of Proposition \ref{prop:exp} revealed the representation of tangents $v$ and $u$ of \eqref{eq:affine-geodesics} in the form
\begin{subequations}
\begin{align}
     T_{\eta}^{(m)} &= \{v\in\R\colon \eta + v \in\mc{B}\},\quad \eta\in\mc{B},
     \label{eq:m-tangent} \\ \label{eq:e-tangent}
     T^{(e)}_{\eta} &= \Big\{u = \frac{v_{u}}{\eta (1-\eta)}\colon v_{u}\in\R\Big\},\quad \eta\in\mc{B}.
\end{align}
\end{subequations}
Thus, in view of \eqref{eq:e-tangent}, the vector transport \eqref{eq:def-Piuv} may be interpreted as sending $u = \frac{v}{\eta (1-\eta)} \in T_{\eta}^{(e)}$ to $u' = \frac{v'}{\eta' (1-\eta')} \in T_{\eta'}^{(e)}$ with $v'=\Pi_{u}(v)$.
\end{remark}

\subsection{Line Search}
We now adopt the simplest approach to optimizing a differentiable function
$f\colon \mc{B}_{n}\to \R$
on the manifold $(\mc{B}_{n},g)$,
by continuously translating a current iterate $y$ in the direction of steepest descent, 
$-\ggrad f(y)$. 
With the retraction calculated in Proposition~\ref{prop:exp}, we will now use Armijo~line~search~\cite[Def. 4.2.2]{Absil:2008aa} to move within our Riemannian manifold.
For minimizing coarse level approximations, see Section \ref{sec:two-level-geometric}, we will employ the same techniques.

In particular, given $f$, a point $y \in\mc{B}_{n}$ and  $\eta :=-\ggrad f(y)\in T\mc{B}_{n}$, the goal is to determine an update of $y$ along the steepest descent direction $\eta$. To this end, we set
\begin{equation}
y_{+} = f\big(\exp_{y}(\alpha \eta)\big),\quad \alpha \in\R_+,
\end{equation}
with $\exp_{y}(\cdot)$ given by \eqref{eq:exp-B}. 
The appropriate value for the step size parameter $\alpha>0$ determines the update $\exp_{y}(\alpha \eta)$ of $y$. An exact line search, which would find $\alpha = \argmin_{t>0} f(\exp_{y}(t \eta))$, is often not feasible. A commonly used alternative is the Armijo~line~search, which starts with a large estimate for $\alpha$ and gradually reduces it until a value is found that results in a sufficient decrease of the objective function $f$. Algorithm \ref{alg:RG-Armijo} details the the Armijo~line~search strategy, and Theorem \ref{thm:ALS-convergence} guarantees convergence of the overall procedure.
% It remains to determine a suitable value of the step size parameter $t_k$ which determines the update $\exp_{y_k}(t_k \eta_k)$ of $y_k$. Exact line search, i.e. $t_k = \argmin_{t} f(\exp_{y_k}(t \eta_k))$, often cannot carried out efficiently. Hence a reasonable strategy is to start with a large estimate of $t_k$ and iteratively shrink it until a value is found that is small enough to provide a sufficient decrease of the objective $f$. For such a backtracking strategy we employ the Armijo~line~search~\cite[Def. 4.2.2]{Absil:2008aa} which is detailed in Algorithm \ref{alg:RG-Armijo} and Theorem \ref{thm:ALS-convergence} ensures its convergence.
\begin{theorem} \cite[Thm. 4.3.1]{Absil:2008aa}\label{thm:ALS-convergence}
Any cluster point of the sequence generated by the Algorithm \eqref{alg:RG-Armijo} is a critical point of $f$.
\end{theorem}

% \begin{algorithm}[H]
% \SetAlgoLined
% \DontPrintSemicolon

% \KwData{Initial point $y_0\in\mc{B}_n$, initial step size $\alpha>0$, $\sigma, \beta \in (0,1)$.}
% \KwResult{Subsequence of $(y_k)$ towards a minimum of $f$.}

% set the initial search direction to $\eta_0 = - \ggrad f(y_k)$\;
% \For{$k = 0, 1,2,\dots$}{ 

% find step size $t_k = \alpha \beta^m$ where $m$ is the smallest integer such that
% \[
% f(\exp_{y_k}(t_k \eta_k)) - f(y_k) \leq \sigma t_k \langle \ggrad f(y_k), \eta_k \rangle_{y_k}\;
% \]

% update the iterate $y_{k+1} = \exp_{y_k}(t_k \eta_k)$\;
% %
% update search direction to 
% \[
% \eta_{k+1} = - \ggrad f(y_{k+1})\;
% \]
% $k = k+1$
% }
% \caption{Riemannian  Gradient with Armijo Line Search \cite{Absil:2008aa}}
% \label{alg:RG-Armijo}
% \end{algorithm}

\begin{algorithm}[H]
\DontPrintSemicolon
\textbf{initialization:} initial  point $y$, initial step size $\alpha_0>0$, $\sigma, \beta \in (0,1)$.\;
 set search direction to $\eta = - \ggrad f(y)$; \;
\Repeat{termination criterion holds.}{
  set $\alpha =\alpha_0$; \;
  \If{$f(\exp_{y}(\alpha \eta)) - f(y) > \sigma \alpha \langle \ggrad f(y), \eta \rangle_{y}$}{
  set $\alpha=\beta \alpha$; \;
  }
  update the iterate $y = \exp_{y}(\alpha \eta)$; \;
  update the search direction to $\eta = - \ggrad f(y)$; \;
%  $i \leftarrow i+1$ \;
  }
\caption{Riemannian  Gradient with Armijo Line Search \cite{Absil:2008aa}}
\label{alg:RG-Armijo}
\end{algorithm}

\subsection{Geometric Means}
Intergrid transfer operators, used to transfer information between grids, are usually calculated using linear interpolation or injection. As interpolation requires computing averages, we need to generalize
such operations to our manifolds in order to devise e.g. prolongation operators.
Proposition \ref{prop:exp} enables to average points on $\mc{B}$ in a computationally convenient way.
\begin{proposition}\label{prop:geometric-mean}
Let $(\eta_{i})_{i\in I}\subset \mc{B}$ be arbitrary numbers and let 
\begin{equation}\label{eq:def-weights}
    \Omega = \{\w_{i}>0\colon i\in I\},\qquad
    \sum_{i\in I}\w_{i}=1
\end{equation}
be given corresponding weights. 
Then the weighted geometric mean of these numbers is
\begin{equation}\label{eq:def-mean-B}
    \ol{\eta}_{\Omega} := 
    \frac{\prod_{i\in I}\big(\frac{\eta_{i}}{1-\eta_{i}}\big)^{\w_{i}}}{1+\prod_{i\in I}\big(\frac{\eta_{i}}{1-\eta_{i}}\big)^{\w_{i}}}
    \in\mc{B}.
\end{equation}
Likewise, the weighted geometric mean of vectors $\{y^{i}\colon i\in I\}\subset\mc{B}_{n}$ is given by
\begin{equation}
    \ol{y}_{\Omega} = (\ol{y}_{j;\Omega})_{j\in I_{n}},
\end{equation}
that is, each component $(\ol{y}_{\Omega})_{j}$ is the weighted geometric average of the corresponding components $\{y^{i}_{j}\colon i\in I\}$ of the given vectors.
\end{proposition}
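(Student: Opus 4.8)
The claim to establish is that the weighted geometric mean of points $(\eta_i)_{i\in I}\subset\mc{B}$, defined via the Riemannian structure of $(\mc{B},g)$, is given in closed form by \eqref{eq:def-mean-B}. The natural notion of mean here is the \emph{e-geodesic barycenter}: the point $\ol\eta$ obtained by averaging in the affine e-coordinate $\theta = \log\frac{\eta}{1-\eta}$ from \eqref{eq:theta-eta}, which is the coordinate system in which geodesics of the e-connection are straight lines (cf.\ the proof of Proposition \ref{prop:exp}). So the first step is to recall from \eqref{eq:exp-parametrization}--\eqref{eq:affine-geodesics} that $\mc{B}$ is dually flat and that e-geodesics are affine in $\theta$; hence the weighted barycenter of $(\eta_i)$ with weights $\Omega$ is the point whose $\theta$-coordinate is $\sum_{i\in I}\w_i\,\theta(\eta_i)$.

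\textbf{Main computation.} The second step is a direct calculation: substitute $\theta(\eta_i) = \log\frac{\eta_i}{1-\eta_i}$ to get
\begin{equation*}
\theta(\ol\eta_\Omega) = \sum_{i\in I}\w_i\log\frac{\eta_i}{1-\eta_i} = \log\prod_{i\in I}\Big(\frac{\eta_i}{1-\eta_i}\Big)^{\w_i},
\end{equation*}
and then apply the inverse map $\eta(\theta) = \frac{e^\theta}{1+e^\theta}$ from \eqref{eq:theta-eta}. Writing $\Pi := \prod_{i\in I}\big(\tfrac{\eta_i}{1-\eta_i}\big)^{\w_i}$, one has $e^{\theta(\ol\eta_\Omega)} = \Pi$, so $\ol\eta_\Omega = \frac{\Pi}{1+\Pi}\in\mc{B}$, which is exactly \eqref{eq:def-mean-B}. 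Membership in $\mc{B}=(0,1)$ is immediate since $\Pi>0$ (each factor is positive as $\eta_i\in(0,1)$), so $0<\frac{\Pi}{1+\Pi}<1$. The vector case follows by applying this componentwise, exactly as \eqref{eq:exp-Bn} and \eqref{eq:inv-exp-Bn} extend their scalar counterparts; this needs only the remark that the product manifold $(\mc{B}_n,g)$ has e-geodesics that are products of the scalar e-geodesics, which was already established in Section \ref{sec:geometry-box} and Proposition \ref{prop:exp}.

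\textbf{Consistency check via $\exp$.} Optionally — and I would include this since the paper emphasizes that $\exp$ is the e-exponential map — one can verify the formula is consistent with Proposition \ref{prop:exp}: for two points and weights $(1-t,t)$, the barycenter should equal $\exp_{\eta_1}\big(t\,\exp_{\eta_1}^{-1}(\eta_2)\big)$. Plugging $\exp_{\eta_1}^{-1}(\eta_2) = \eta_1(1-\eta_1)\log\frac{(1-\eta_1)\eta_2}{\eta_1(1-\eta_2)}$ from \eqref{eq:inv-exp-B} into \eqref{eq:exp-B} gives, after the $\eta_1(1-\eta_1)$ factors cancel in the exponent, $\frac{\eta_1(\eta_2/(1-\eta_2))^t((1-\eta_1)/\eta_1)^t \cdot \text{stuff}}{\dots}$; simplifying yields $\frac{(\eta_1/(1-\eta_1))^{1-t}(\eta_2/(1-\eta_2))^{t}}{1+(\eta_1/(1-\eta_1))^{1-t}(\eta_2/(1-\eta_2))^{t}}$, matching \eqref{eq:def-mean-B} with $I=\{1,2\}$.

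\textbf{Anticipated obstacle.} There is essentially no technical obstacle — the computation is elementary once the \emph{definition} of weighted geometric mean is pinned down as the e-barycenter. The only real care needed is expository: making precise (in the statement or a preceding sentence) that "weighted geometric mean" refers to averaging in e-coordinates (equivalently, the minimizer of $\sum_i \w_i D_\varphi$-type divergence, or the $\Omega$-weighted iterated e-geodesic midpoint), so that the reader understands \eqref{eq:def-mean-B} is a theorem about a specific geometric object rather than an ad hoc formula. If the paper wants the barycenter to be characterized variationally (e.g.\ as $\argmin$ of a sum of squared e-geodesic-compatible divergences), one should note that the relevant divergence is the Bregman divergence of the log-partition function $\psi$, and that its affine-in-$\theta$ structure makes the first-order optimality condition linear in $\theta$, again giving $\theta(\ol\eta_\Omega)=\sum_i\w_i\theta(\eta_i)$ with no issues of existence or uniqueness since $\psi$ is strictly convex of Legendre type.
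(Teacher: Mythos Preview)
Your proof is correct and lands on the same computation as the paper, but the two differ in how the mean is \emph{defined}. The paper takes as its starting point the Riemannian center-of-mass characterization
\[
0 = \sum_{i\in I}\w_{i}\exp_{\ol{\eta}_{\Omega}}^{-1}(\eta_{i}),
\]
citing Jost, with $\exp^{-1}$ the e-connection inverse exponential \eqref{eq:inv-exp-B}; expanding this and dividing by $\ol\eta_\Omega(1-\ol\eta_\Omega)$ yields exactly your equation $\theta(\ol\eta_\Omega)=\sum_i\w_i\theta(\eta_i)$, after which the algebra is identical. You instead \emph{define} the mean directly as the affine average in the flat $\theta$-coordinate and then (optionally) check consistency with $\exp$. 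Both routes are valid and equivalent here because the e-connection is flat; the paper's framing has the advantage of making the link to the standard Riemannian-barycenter notion explicit (so the reader sees \eqref{eq:def-mean-B} as a closed-form solution of a known geometric problem), while yours is slightly more elementary and sidesteps the need to invoke the center-of-mass lemma. Your ``anticipated obstacle'' paragraph already identifies this definitional point accurately.
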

\begin{proof}
We define $\ol{\eta}_{\Omega}$ by the condition that characterizes the center of mass on a Riemannian manifold \cite[Lemma 6.9.4]{Jost:2017aa},
\begin{equation}\label{eq:mean-OC}
    0 = \sum_{i\in I}\w_{i}\exp_{\ol{\eta}_{\Omega}}^{-1}(\eta_{i}),
\end{equation}
except for using the exponential map \eqref{eq:def-exp} instead of the exponential map corresponding to the metric connection. Using \eqref{eq:def-inv-exp} we get
\begin{subequations}
\begin{align}
    0 &= \sum_{i\in I}\w_{i} \ol{\eta}_{\Omega}(1-\ol{\eta}_{\Omega})\log\frac{(1-\ol{\eta}_{\Omega})\eta_{i}}{\ol{\eta}_{\Omega}(1-\eta_{i})}.
    \intertext{Subdividing by $\ol{\eta}_{\Omega}(1-\ol{\eta}_{\Omega})\in\mc{B}$ and taking into account \eqref{eq:def-weights} gives \eqref{eq:def-mean-B},}
    \log\frac{1-\ol{\eta}_{\Omega}}{\ol{\eta}_{\Omega}}
    &= -\sum_{i\in I}\w_{i}\log\frac{\eta_{i}}{1-\eta_{i}}
    \\
    \ol{\eta}_{\Omega} 
    &= \frac{e^{\sum_{i\in I}\w_{i}\log\frac{\eta_{i}}{1-\eta_{i}}}}{1+e^{\sum_{i\in I}\w_{i}\log\frac{\eta_{i}}{1-\eta_{i}}}}
    = \frac{\prod_{i\in I}\big(\frac{\eta_{i}}{1-\eta_{i}}\big)^{\w_{i}}}{1+\prod_{i\in I}\big(\frac{\eta_{i}}{1-\eta_{i}}\big)^{\w_{i}}}.
\end{align}
\end{subequations}
\end{proof}
\begin{remark}[Geometric averaging in closed form]
We point out that the computation of Riemannian means typically requires to solve an optimality condition of the form \eqref{eq:mean-OC} by a fixed-point iteration. In the present case, the chosen geometry yields the corresponding geometric mean \eqref{eq:def-mean-B} in closed form.
\end{remark}

\subsection{Grid Transfer Operators}

\subsubsection{Prolongation}

We associate with every fine grid point $j\in I^{c}_{m}$ -- recall the notation and nomenclature in Section \ref{sec:Notation} -- a neighborhood $N_{j}$ and corresponding weights $\Omega_{j}$,
\begin{subequations}\label{eq:def-Nj}
\begin{align}
    N_{j} &\subset I_{m},\quad j\in I_{m}^{c}
\\
    \Omega_{j} &= \{\w_{i}\colon i\in N_{j}\}.
\end{align}
\end{subequations}
The prolongation map is defined by keeping coarse grid components and by assigning geometric averages of coarse grid components to fine grid components, i.e.
\begin{equation}\label{eq:def-P}
    P\colon\mc{B}_{m}\to\mc{B}_{n},\qquad
    P(x)_{j} = \begin{cases}
    x_{j} &\text{if}\; j\in I_{m}, \\
    \ol{x}_{\Omega_{j}}, &\text{if}\; j\in I_{m}^{c}.
    \end{cases}
\end{equation}
Tangent vectors are transferred from coarse to fine by the differential of the prolongation map.
\begin{lemma}\label{lem:dP}
The differential of the prolongation map \eqref{eq:def-P} is given by
\begin{subequations}\label{eq:dPxv}
\begin{align}
    dP_{x}\colon T_{x}\mc{B}_{m} &\to T_{P(x)}\mc{B}_{n},\\
    (dP_{x}u)_{j} &= \begin{cases}
    u_{j}, &\text{if}\; j\in I_{m}, \\
    \ol{x}_{\Omega_{j}}(1-\ol{x}_{\Omega_{j}})\sum_{i\in N_{j}}\frac{\w_{i}}{x_{i}(1-x_{i})} u_{i}, &\text{if}\; j\in I^{c}_{m}, \\
    0, &\text{otherwise.}
    \end{cases}
\end{align}
\end{subequations}
\end{lemma}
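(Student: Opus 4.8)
The plan is to differentiate the piecewise definition \eqref{eq:def-P} of $P$ coordinate by coordinate, using the closed form \eqref{eq:def-mean-B} of the geometric mean from Proposition \ref{prop:geometric-mean}. Under the identification \eqref{eq:ident-TB}, $dP_x$ is simply the Jacobian of $P$ at $x$, so it suffices to compute $\partial P(x)_j/\partial x_i$ for every $j\in I_n$ and $i\in I_m$ and then apply the result to $u$. For $j\in I_m$ the component map $x\mapsto P(x)_j=x_j$ is a coordinate projection, whence $\partial P(x)_j/\partial x_i=\delta_{ij}$ and $(dP_xu)_j=u_j$; this is the first line of \eqref{eq:dPxv}. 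Since $I_n=I_m\dot\cup I_m^c$, the only remaining case is $j\in I_m^c$, for which $P(x)_j=\ol x_{\Omega_j}$; the third ("otherwise") line of \eqref{eq:dPxv} is then vacuous, or, read through the matrix picture, merely records that $P(x)_j$ does not depend on the coordinates $x_i$ with $i\in I_m\setminus N_j$.

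For $j\in I_m^c$ I would introduce the logistic function $\sigma(s)=e^s/(1+e^s)$ and the natural parameter $s_j(x)=\sum_{i\in N_j}\w_i\log\frac{x_i}{1-x_i}$, so that \eqref{eq:def-mean-B} becomes $P(x)_j=\sigma\bigl(s_j(x)\bigr)$; this is smooth on $\mc B_m$ because each $\log\frac{x_i}{1-x_i}$ is smooth on $\mc B$. Using $\sigma'(s)=\sigma(s)\bigl(1-\sigma(s)\bigr)$ together with $\frac{\partial}{\partial x_i}\log\frac{x_i}{1-x_i}=\frac{1}{x_i(1-x_i)}$, the chain rule gives, for $i\in N_j$,
\[
\frac{\partial P(x)_j}{\partial x_i}
=\sigma\bigl(s_j(x)\bigr)\bigl(1-\sigma(s_j(x))\bigr)\frac{\w_i}{x_i(1-x_i)}
=\ol x_{\Omega_j}\bigl(1-\ol x_{\Omega_j}\bigr)\frac{\w_i}{x_i(1-x_i)},
\]
and $\partial P(x)_j/\partial x_i=0$ when $i\in I_m\setminus N_j$. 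Summing these against $u$ produces exactly the middle line of \eqref{eq:dPxv}. Finally, Proposition \ref{prop:geometric-mean} guarantees $P(x)_j\in\mc B$ for $j\in I_m^c$, hence $P(x)\in\mc B_n$, so that $dP_x$ indeed maps $T_x\mc B_m$ into $T_{P(x)}\mc B_n$ as asserted.

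The computation is elementary and I do not expect any real obstacle; the only points worth stating carefully are the well-definedness of $dP_x$ between the claimed tangent spaces and the bookkeeping of which fine-grid components depend on which coarse-grid coordinates. If a more intrinsic derivation is preferred, one may instead differentiate the center-of-mass identity $0=\sum_{i\in N_j}\w_i\,\exp^{-1}_{\ol x_{\Omega_j}}(x_i)$ implicitly in $x$ and substitute the differentials of $\exp^{-1}$ from Lemma \ref{eq:dexp-dexp-1}; this reproduces the same formula and simultaneously exhibits $dP_x$, restricted to the fine-grid block, as a transport of tangent vectors that is affine in the $e$-coordinates, in the spirit of Remark \ref{rem:vector-transport}.
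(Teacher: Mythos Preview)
Your proposal is correct and follows essentially the same route as the paper: the paper introduces $\kappa_j(x;\Omega)=\sum_{i\in N_j}\w_i\log\frac{x_i}{1-x_i}$ and $\vphi_j=\ol{x}_{\Omega_j}=e^{\kappa_j}/(1+e^{\kappa_j})$, then applies the chain rule exactly as you do with your $s_j$ and $\sigma$. Your additional remarks on the vacuity of the ``otherwise'' case and the alternative implicit-differentiation derivation are sound but not needed for the argument.
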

\begin{proof}
Put
\begin{subequations}
\begin{align}
    \kappa_{j}(x;\Omega)
    &= \sum_{i\in N_{j}}\w_{i}\log\frac{x_{i}}{1-x_{i}},
    \\ \label{eq:proof-mean-vphi}
    \vphi_{j}(x;\Omega)
    &= \ol{x}_{\Omega_{j}}
    \overset{\eqref{eq:def-mean-B}}{=}
    \frac{e^{\kappa_{j}(x;\Omega)}}{1+\kappa_{j}(x;\Omega)}.
\end{align}
\end{subequations}
Then
\begin{subequations}
\begin{align}
    d\kappa_{j}(x;\Omega)[u]
    &= \frac{d}{dt}\kappa_{j}(x+t u;\Omega)|_{t=0}
    = \sum_{i\in N_{j}}\frac{\w_{i}}{x_{i}(1-x_{i})} u_{i},
    \\
    d\vphi_{j}(x;\Omega)[u]
    &= \frac{e^{\kappa_{j}(x;\Omega)}}{(1+\kappa_{j}(x;\Omega))^{2}} d\kappa_{j}(x;\Omega)[u]
    \overset{\eqref{eq:proof-mean-vphi}}{=}
    \ol{x}_{\Omega_{j}}(1-\ol{x}_{\Omega_{j}})
    d\kappa_{j}(x;\Omega)[u]
\end{align}
\end{subequations}
and hence from \eqref{eq:def-P}
\begin{equation}
    (dP_{x}u)_{j}
    = (dP(x)[u])_{j}
    = \begin{cases}
    u_{j}, &\text{if}\,j\in I_{m}, \\
    d\vphi_{j}(x;\Omega)[u], &\text{if}\,j\in I_{m}^{c}, \\
    0, &\text{otherwise},
    \end{cases}
\end{equation}
which is \eqref{eq:dPxv}.
\end{proof}

\subsubsection{Restriction}
In view of Definition \ref{eq:def-P}, it is obvious that the only coarse grid vector $x$ that conforms to the fine grid vector $y=P(x)$ is the trivial restriction
\begin{equation}\label{eq:def-restriction}
    R\colon\mc{B}_{n}\to\mc{B}_{m},\qquad
    x = R y = [y]_{I_{m}} = (y_{j})_{j\in I_{m}}.
\end{equation}
Unlike the prolongation map $P$ \eqref{eq:def-P}, this restriction map $R$ is linear. Concerning the restriction of tangent vectors, using the differential $dR = R$ analogous to \eqref{eq:dPxv} would be a less sensible choice, however. Rather, we define for this purpose an individual operator as follows.

Let $x=R y$, $v\in T_{y}\mc{B}_{n}$ and $u\in T_{x}\mc{B}_{m}$. We define
\begin{subequations}\label{eq:def-TR}
\begin{align}
    TR_{y}\colon T_{y}\mc{B}_{n}&\to  T_{x}\mc{B}_{m},
    \\ \label{eq:def-TRy}
    TR_{y} &= G_{m}(x)^{-1} dP_{x}^{\T} G_{n}(y),\qquad x\in R y
    \\ \label{eq:def-TRyij}
    (TR_{y})_{ij}
    &= \begin{cases}
    (dP_{x}^{\T})_{ij}, &\text{if}\; j=i\in I_{m}
    \\
    \frac{x_{i}(1-x_{i})}{y_{j}(1-y_{j})}(dP_{x}^{\T})_{ij}, &\text{if}\; j\in I^{c}_{m}\;\text{and}\;i\in N_{j}
    \\
    0, &\text{otherwise.}
    \end{cases}, \\
    & \textnormal{ for }
    j\in I_{n}=I_{m}\dot\cup I_{m}^{c},\;
    i\in I_{m}.
\end{align}
\end{subequations}
The rationale behind this definition is the equation
\begin{equation}\label{eq:TR-dP-a}
    \la u, TR_{y} v\ra_{x} = \la dP_{x} u,v\ra_{y}
\end{equation}
that mimics in terms of $TR$ and the differential $dP$ the common Galerkin condition $R = P^{\T}$ in the case of Euclidean scenarios and linear mappings $R, P$, while taking into account the coarse  and find-grid metrics. Expanding this equation using \eqref{eq:metric-notation} gives
\begin{equation}\label{eq:TR-dP-b}
    \la u, G_{m}(x) TR_{y} v\ra = \la dP_{x} u,G_{n}(y) v\ra
\end{equation}
which implies \eqref{eq:def-TRy}. Inspecting the componentwise specification \eqref{eq:def-TRyij} of $TR_{y}$, we observe the aforementioned relation $TR_{y}=dP_{x}^{\T}$ in the case of coinciding coarse and fine grid points indexed by $i$ and $j$, whereas the second line of \eqref{eq:def-TRyij} shows that vector transport from the fine grid to the coarse grid is involved -- cf.~Remark \ref{rem:vector-transport} and \eqref{eq:def-Piuv} -- if $j\not\in I_{m}^{c}\subset I_{n}$ differs from $i\in I_{m}$.

As a result, pushing tangent vectors to the coarse grid using the nonlinear mapping $TR$ better conforms to the nonlinear prolongation map \eqref{eq:def-P}, than merely using $dR = R$ based on the restriction map \eqref{eq:def-restriction}, which is a reasonable choice for transferring vectors of $\mc{B}_{n}$ (rather than tangent vectors of $T\mc{B}_{n}$).

\subsection{Two-Level Geometric Optimization}\label{sec:two-level-geometric}
We generalize the two-level Euclidean optimization approach of Section \ref{sec:two-level-Euclidean-optimization} to our Riemannian setting.

\subsubsection{Coarse Objective Function}
We adapt the coarse grid model \eqref{eq:def-psi-Euclidean} of the objective function at $x_{0}=R y_{0}$ to the present geometric setting,
\begin{subequations}\label{eq:def-psi-coarse-grid}
\begin{align}
    \psi(x) &= f(x)-\la\exp_{x_{0}}^{-1}(x),\kappa(x_{0},y_{0})\ra_{x_{0}},
    \label{eq:def-psi-coarse-grid-a} \\ \label{eq:def-psi-coarse-grid-b}
    \kappa(x_{0},y_{0}) &= \ggrad f(x_{0}) - TR_{y_{0}}\ggrad f(y_{0}),
\end{align}
\end{subequations}
with the Riemannian gradients $\ggrad f(x_{0})=G_{m}(x_{0})^{-1}\partial f(x_{0})$, $\ggrad f(y_{0})=G_{n}(y_{0})^{-1}\partial f(y_{0})$. We check the first-order coherence condition corresponding to \eqref{eq:first-order-coherence}.
\begin{lemma}[First-order coherence condition]
The coarse grid model \eqref{eq:def-psi-coarse-grid} satisfies
\begin{equation}\label{eq:cond-1st-geom}
    \ggrad\psi(x_{0}) = TR_{y_{0}}\ggrad f(y_{0}).
\end{equation}
\end{lemma}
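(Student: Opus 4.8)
The plan is to compute the Riemannian differential of $\psi$ at the base point $x_{0}$ directly and then read off its gradient. Write $\psi(x) = f(x) - c(x)$ with the correction term $c(x) = \la \exp_{x_{0}}^{-1}(x),\kappa(x_{0},y_{0})\ra_{x_{0}}$, where $\kappa(x_{0},y_{0})\in T_{x_{0}}\mc{B}_{m}$ is a \emph{fixed} tangent vector. Since $c$ is the composition of the smooth map $\exp_{x_{0}}^{-1}\colon\mc{B}_{m}\to T_{x_{0}}\mc{B}_{m}$ with the linear functional $v\mapsto \la v,\kappa(x_{0},y_{0})\ra_{x_{0}}$, its differential at $x_{0}$ in a direction $w\in T_{x_{0}}\mc{B}_{m}$ is
\begin{equation}
    dc(x_{0})[w] = \big\la\, d(\exp_{x_{0}}^{-1})(x_{0})[w],\, \kappa(x_{0},y_{0})\big\ra_{x_{0}}.
\end{equation}

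The key observation is that $d(\exp_{x_{0}}^{-1})(x_{0}) = \mrm{id}_{T_{x_{0}}\mc{B}_{m}}$. This is nothing but the local rigidity of the retraction $\exp$ (Proposition \ref{prop:retraction}) transferred to the inverse: $\exp_{x_{0}}$ maps $0\mapsto x_{0}$ with $d\exp_{x_{0}}(0)=\mrm{id}$, hence the inverse map has differential $\mrm{id}$ at $x_{0}$. Alternatively one reads it off componentwise from Lemma \ref{eq:dexp-dexp-1}: evaluating \eqref{eq:dinvexp} at $\eta'=\eta$ gives $d\exp_{\eta}^{-1}(\eta)v' = v'$, and the product structure \eqref{eq:inv-exp-Bn} extends this to $\mc{B}_{m}$. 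Consequently $dc(x_{0})[w] = \la w,\kappa(x_{0},y_{0})\ra_{x_{0}}$.

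It then remains to assemble the gradient. Using $df(x_{0})[w] = \la\partial f(x_{0}),w\ra = \la\ggrad f(x_{0}),w\ra_{x_{0}}$ (by definition $\ggrad f(x_{0}) = G_{m}(x_{0})^{-1}\partial f(x_{0})$, as recalled after \eqref{eq:def-psi-coarse-grid}), we obtain for all $w\in T_{x_{0}}\mc{B}_{m}$
\begin{equation}
    d\psi(x_{0})[w] = \la\ggrad f(x_{0}) - \kappa(x_{0},y_{0}),\, w\ra_{x_{0}},
\end{equation}
so that $\ggrad\psi(x_{0}) = \ggrad f(x_{0}) - \kappa(x_{0},y_{0})$. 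Substituting the definition \eqref{eq:def-psi-coarse-grid-b} of $\kappa(x_{0},y_{0})$ makes the $\ggrad f(x_{0})$ terms cancel, leaving $\ggrad\psi(x_{0}) = TR_{y_{0}}\ggrad f(y_{0})$, which is \eqref{eq:cond-1st-geom}.

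There is essentially no analytic obstacle here: the only point that needs care is the identity $d(\exp_{x_{0}}^{-1})(x_{0})=\mrm{id}$, i.e.\ that the linear correction term in \eqref{eq:def-psi-coarse-grid-a} has been set up precisely so that its first-order contribution at $x_{0}$ is the plain pairing with $\kappa(x_{0},y_{0})$ in the metric $g_{x_{0}}$; everything else is the definition of the Riemannian gradient and the cancellation built into $\kappa(x_{0},y_{0})$. One should also note that the base point $x_{0}$ in the chart $\exp_{x_{0}}^{-1}$ is held fixed when differentiating, so no extra terms arise from differentiating the chart itself.
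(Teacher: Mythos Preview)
Your proof is correct and reaches the same conclusion $\ggrad\psi(x_{0})=\ggrad f(x_{0})-\kappa(x_{0},y_{0})$ as the paper, but via a slightly different route. The paper computes the Euclidean partial derivatives of the correction term for \emph{general} $x$, using the explicit componentwise formula \eqref{eq:dinvexp} from Lemma~\ref{eq:dexp-dexp-1} to obtain $\partial\,\la\exp_{x_{0}}^{-1}(x),\kappa\ra_{x_{0}} = G_{m}(x)\kappa$, and then converts to the Riemannian gradient to get $\ggrad\psi(x)=\ggrad f(x)-\kappa$ for all $x$ before specializing to $x_{0}$. You instead work only at the base point and appeal to the local rigidity of the retraction (Proposition~\ref{prop:retraction}) to conclude $d(\exp_{x_{0}}^{-1})(x_{0})=\mrm{id}$, bypassing the explicit coordinate computation. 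Your argument is cleaner and more conceptual for the statement as written; the paper's argument yields the additional fact that $\ggrad\psi(x)=\ggrad f(x)-\kappa(x_{0},y_{0})$ holds at every $x$, not just $x_{0}$.
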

\begin{proof}
By Lemma \ref{eq:dexp-dexp-1}, we have
\begin{equation}
    \frac{\partial}{\partial x_{i}} \la\exp_{x_{0}}^{-1}(x),\kappa(x_{0},y_{0})\ra
    = \frac{x_{0i}(1-x_{0i})}{x_{i}(1-x_{i})} \kappa_{i}(x_{0},y_{0})
\end{equation}
and hence
\begin{subequations}
\begin{align}
    \partial \la\exp_{x_{0}}^{-1}(x),\kappa(x_{0},y_{0})\ra_{x_{0}}
    &= G_{m}(x)G_{m}(x_{0})^{-1} G_{m}(x_{0})\kappa(x_{0},y_{0}) \\
    &= G_{m}(x)\kappa(x_{0},y_{0}).
\end{align}
\end{subequations}
Substituting into \eqref{eq:def-psi-coarse-grid} gives
\begin{subequations}
\begin{align}
    \ggrad\psi(x) &= \ggrad f(x)-\kappa(x_{0},y_{0})
    \\
    &\overset{\eqref{eq:def-psi-coarse-grid-b}}{=} 
    \ggrad f(x) - \ggrad f(x_{0}) + TR_{y_{0}}\ggrad f(y_{0}).
\end{align}
\end{subequations}
Setting $x=x_{0}$ yields \eqref{eq:cond-1st-geom}.
\end{proof}

\subsubsection{Coarse Grid Descent Step, Algorithm}
We examine the effect of minimizing the coarse grid function \eqref{eq:def-psi-coarse-grid}, analogous to Section \ref{sec:Coarse-descent-direction}. Rearranging terms, we have
\begin{subequations}
\begin{align}
    \psi(x) - f(x_{0}) &= f(x)-f(x_{0}) - \la \exp_{x_{0}}^{-1}(x),\ggrad f(x_{0})\ra_{x_{0}} \\
    & \quad + \la \exp_{x_{0}}^{-1}(x), TR_{y_{0}}\ggrad f(y_{0}) \ra_{x_{0}}
    \\
    &\overset{\eqref{eq:TR-dP-a}}{=}
    f(x)-f(x_{0}) - \la \exp_{x_{0}}^{-1}(x),\partial f(x_{0})\ra \\
    & \qquad + \la dP_{x_{0}}\exp_{x_{0}}^{-1}(x),\ggrad f(y_{0}) \ra_{y_{0}}.
\end{align}
\end{subequations}
Following the reasoning of Section \ref{sec:Coarse-descent-direction} shows that, if $x$ can be determined such that $\psi(x)< f(x_{0})$, then $dP_{x_{0}}\exp_{x_{0}}^{-1}(x)$ is a descent direction with respect to $f$ on the \textit{fine} grid if the first three terms on the right-hand side, i.e.~$f(x)-f(x_{0}) - \la \exp_{x_{0}}^{-1}(x),\partial f(x_{0})\ra$, are nonnegative. 

In the Euclidean case, with $x-x_{0}$ in place of $\exp_{x_{0}}^{-1}(x)$, the latter automatically holds since $f(x)-f(x_{0}) - \la \exp_{x_{0}}^{-1}(x),\partial f(x_{0})\ra = D_{f}(x,x_{0})\geq 0$ (cf.~Remark \ref{rem:psi-Df}). In the present geometric setting, however, this reasoning can only be guaranteed if $\|x-x_{0}\|$ is sufficiently small, as stated in the following Lemma.
\begin{lemma}\label{lem:exp-x0-1}
For the mapping defined by \eqref{eq:def-inv-exp}, one has
\begin{equation}\label{eq:exp-1-Taylor}
    \exp_{x_{0}}^{-1}(x) = x-x_{0} + \mc{R}(x-x_{0}) \quad\text{with}\quad
    \lim_{x\to x_{0}}\frac{\mc{R}(x-x_{0})}{\|x-x_{0}\|} = 0.
\end{equation}
Furthermore,
\begin{equation}\label{eq:exp-1-sign}
    (x-x_{0})_{i}\big(\exp_{x_{0}}^{-1}(x)\big)_{i} \geq 0,\qquad \forall i\in I_{m}.
\end{equation}
\end{lemma}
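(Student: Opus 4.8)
The plan is to work componentwise, since by \eqref{eq:inv-exp-Bn} the map $\exp_{x_{0}}^{-1}$ acts coordinatewise through the scalar formula \eqref{eq:inv-exp-B}. So it suffices to analyze the single-variable function
\begin{equation*}
    s(\eta,\eta') = \exp_{\eta}^{-1}(\eta') = \eta(1-\eta)\log\frac{(1-\eta)\eta'}{\eta(1-\eta')},
\end{equation*}
for $\eta,\eta'\in\mc{B}$, with $\eta$ playing the role of $x_{0i}$ and $\eta'$ the role of $x_{i}$.

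For the first-order expansion \eqref{eq:exp-1-Taylor}, I would simply compute a first-order Taylor expansion of $\eta'\mapsto s(\eta,\eta')$ around $\eta'=\eta$. By Lemma \ref{eq:dexp-dexp-1}, the derivative is $\frac{\partial}{\partial\eta'}s(\eta,\eta') = \frac{\eta(1-\eta)}{\eta'(1-\eta')}$ (this is exactly \eqref{eq:dinvexp} read as a scalar multiplier), which evaluates to $1$ at $\eta'=\eta$. Hence $s(\eta,\eta') = (\eta'-\eta) + o(|\eta'-\eta|)$ as $\eta'\to\eta$, which is the scalar version of \eqref{eq:exp-1-Taylor}; collecting the remainders over all components $i\in I_{m}$ and using equivalence of norms on $\R^{m}$ gives the stated vector claim with $\mc{R}(x-x_{0})$ the vector of componentwise remainders. (Alternatively one can just note $s(\eta,\eta)=0$ and the derivative is $1$ at $\eta'=\eta$, so the remainder is $C^{1}$-small.)

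For the sign property \eqref{eq:exp-1-sign}, I would argue that $s(\eta,\eta')$ has the same sign as $\eta'-\eta$. Fixing $\eta$ and viewing $s(\eta,\cdot)$ as a function of $\eta'$: it vanishes at $\eta'=\eta$, and its $\eta'$-derivative $\frac{\eta(1-\eta)}{\eta'(1-\eta')}$ is strictly positive on all of $\mc{B}$ (since $\eta,\eta'\in(0,1)$), so $s(\eta,\cdot)$ is strictly increasing on $\mc{B}$; therefore $s(\eta,\eta')>0$ when $\eta'>\eta$, $s(\eta,\eta')<0$ when $\eta'<\eta$, and $s(\eta,\eta')=0$ when $\eta'=\eta$. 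In all three cases $(\eta'-\eta)\,s(\eta,\eta')\geq 0$. Applying this with $\eta=x_{0i}$, $\eta'=x_{i}$ for each $i\in I_{m}$ gives \eqref{eq:exp-1-sign}. One can also read the sign directly off the logarithm: $\log\frac{(1-\eta)\eta'}{\eta(1-\eta')}$ is positive iff $\frac{\eta'}{1-\eta'}>\frac{\eta}{1-\eta}$ iff $\eta'>\eta$ (monotonicity of the logit), and the prefactor $\eta(1-\eta)$ is positive.

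There is no real obstacle here — everything reduces to elementary one-variable calculus on the formula \eqref{eq:inv-exp-B}, with the derivative already handed to us by Lemma \ref{eq:dexp-dexp-1}. The only mild point of care is the passage from the componentwise remainder estimates to the vector statement \eqref{eq:exp-1-Taylor}, which just needs the remark that a finite vector of $o(|\eta'_i-\eta_i|)$ terms is $o(\|x-x_{0}\|)$; I would state this in one line rather than belabor it.
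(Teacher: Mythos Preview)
Your proposal is correct and follows essentially the same approach as the paper: both arguments work componentwise, invoke the derivative formula \eqref{eq:dinvexp} from Lemma~\ref{eq:dexp-dexp-1} to obtain the first-order expansion (the paper simply says it ``is easy to compute''), and for the sign property use that this derivative is positive so that $x_i\mapsto\exp_{x_{0i}}^{-1}(x_i)$ is monotone increasing and vanishes at $x_{0i}$. Your write-up is just more explicit than the paper's two-sentence proof.
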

\begin{proof}
The expansion \eqref{eq:exp-1-Taylor} is easy to compute. As for \eqref{eq:exp-1-sign}, we have by \eqref{eq:dinvexp} that, for each component $i\in I_{m}$, the derivative of the function $x_{i}\mapsto \exp_{x_{0i}}(x_{i})$ is nonnegative. Consequently, this function is monotonously increasing as is the function $x_{i}\mapsto x_{i}-x_{0i}$, and both functions only attain the value $0$ at $x_{0i}$.
\end{proof}
Expansion \eqref{eq:exp-1-Taylor} shows that for $x$ sufficiently close to $x_{0}$, $dP_{x_{0}}\exp_{x_{0}}^{-1}(x)$ is a descent direction on the fine grid whenever $\psi(x)<f(x_{0})$. Relation \eqref{eq:exp-1-sign}, on the other hand, shows that the vector $\exp_{x_{0}}^{-1}(x)$ points into the positive half space defined by $x-x_{0}$ and, in this sense, is aligned with $x-x_{0}$. Yet, for increasing $\|x-x_{0}\|$, the nonlinearity of this term increases as well, such that the inner product $\la \exp_{x_{0}}^{-1}(x),\partial f(x_{0})\ra$ may not allow to certify a descent direction by merely checking the sign of $\psi(x)-f(x_{0})$. Therefore, besides the geometric version of condition \eqref{coarse_condition},
\begin{equation}\label{eq:Gratton-geometric}
    \|TR_{y_{0}}\ggrad f(y_{0})\|_{R y_{0}} \geq \eta \|\ggrad f(y_{0})\|_{R y_{0}}
%    \quad\wedge\quad
%    \|TR_{y_{0}}\ggrad f(y_{0})\|_{R y_{0}} \geq \veps_{1}
    \qquad\wedge\qquad
    \|y-y_{c}\| \geq \veps,
\end{equation}
the following algorithm involves the additional test 
\begin{equation}\label{eq:Df-geom-test}
    \wt{D}_{f}(x,x_{0}):= f(x)-f(x_{0}) - \la \exp_{x_{0}}^{-1}(x),\partial f(x_{0})\ra \geq 0,
\end{equation}
that still can be cheaply done at the coarse level, rather than checking directly and more costly the vector $dP_{x_{0}}\exp_{x_{0}}^{-1}(x)$ on the fine level.

\begin{algorithm}[ht]
\DontPrintSemicolon
\textbf{initialization:} initial fine grid point $y$, corresponding coarse grid point $x=R y$.\;
\Repeat{termination criterion holds.}{
  set $y_{0}=y$, $x_{0}=R y$; \;
  \If{condition \eqref{eq:Gratton-geometric}
 is satisfied at $x_{0}$}{
    find $x$ such that $\psi(x)<f(x_{0})$; \tcc*{coarse grid update} 
    \If{condition \eqref{eq:Df-geom-test} holds}{  %\tcc*{descent direction test}
    set $h = dP_{x_{0}}\exp_{x_{0}}^{-1}(x)$; \;
    find $\alpha>0 \colon f(\exp_{y_{0}}(\alpha h)) < f(y_{0})$;\tcc*{line search} 
    $y \leftarrow \exp_{y_{0}}(\alpha h)$ \tcc*{update}
    }
  }\Else{
  apply one fine level iteration to compute $y$ such that $f(y) < f(y_{0})$. \tcc*{update}
  }
%  $i \leftarrow i+1$ \;
  }
\caption{Two-Level Geometric Optimization\label{alg:twogridgeometric}}
\end{algorithm}

%%%%%%%%%%%%%%%%%%%%%%%%%%%%%%%%%%%%%%%%%%%%%
\subsection{MultiLevel Extension}\label{sec:multilevel-geometric}
Analogous to Section \ref{sec:multilevel-Euclidean}, we introduce the notation
\begin{subequations}
\begin{align}
    x^{(\ell)} &\in \mc{B}^{(\ell)}_{n_{\ell}}
    \intertext{for the variables at levels $\ell\in\{0,1,2,\dotsc\}$, where} 
    x^{(0)} = x &\in \mc{B}_{n} = \mc{B}^{(0)}_{n_{0}}
\end{align}
are the original fine grid variables. The components $x^{(\ell)}_{i}$ of these vectors are indexed by the nested index sets
\begin{equation}
    i\in I_{n_{\ell}} \subset I_{n_{\ell-1}} \subset \dotsb \subset I_{n} = I_{n_{0}}.
\end{equation}
\end{subequations}
Accordingly, the coarse grid model \eqref{eq:def-psi-coarse-grid} applied at level $\ell$ reads
\begin{subequations}
\begin{align}
    \psi(x^{(\ell)}) &= f(x^{(\ell)})-\la\exp_{x_{0}^{(\ell)}}^{-1}(x^{(\ell)}),\kappa(x_{0}^{(\ell)},x_{0}^{(0)})\ra_{x_{0}^{(\ell)}},
    \\
    \kappa(x_{0}^{(\ell)},x_{0}^{(0)}) &= \ggrad f(x_{0}^{(\ell)}) - TR_{(\ell)}^{(0)}(x_{0}^{(0)})\ggrad f(x_{0}^{(0)}),
\end{align}
\end{subequations}
with $x_{0}^{(\ell)} = R_{(\ell)}^{(0)} x_{0}^{(0)}$. Prolongation operators $P_{(0)}^{(\ell)},\,\ell>1$ can be defined in the same way as $P_{0}^{(1)}=P$ is defined by \eqref{eq:def-P}, with increasingly larger neighborhoods \eqref{eq:def-Nj}. By this, the mappings $TR_{(\ell)}^{(0)}$ for pushing tangent vectors from level $0$ to $\ell$ follow analogous to \eqref{eq:TR-dP-a}.

This formulation adapts to the geometric setting the Euclidean multilevel approach outlined in Section \ref{sec:multilevel-Euclidean}, and similar comments concerning the trade-off of invoking coarse grid models at various levels apply. We leave a detailed study of this multilevel extension for future work.
% !TEX root =  ../SI-Bregman.tex
%%%%%%%%%%%%%%%%%%%%%%%%%%%%%%%

\newcommand{\showPhantoms}[1]{
\begin{centering}
		\begin{tabular}{c@{\hskip 0.4em}c@{\hskip 0.4em}c@{\hskip 0.4em}c@{\hskip 0.4em}c@{\hskip 0.4em}c}
		
			\includegraphics[width=#1\textwidth]{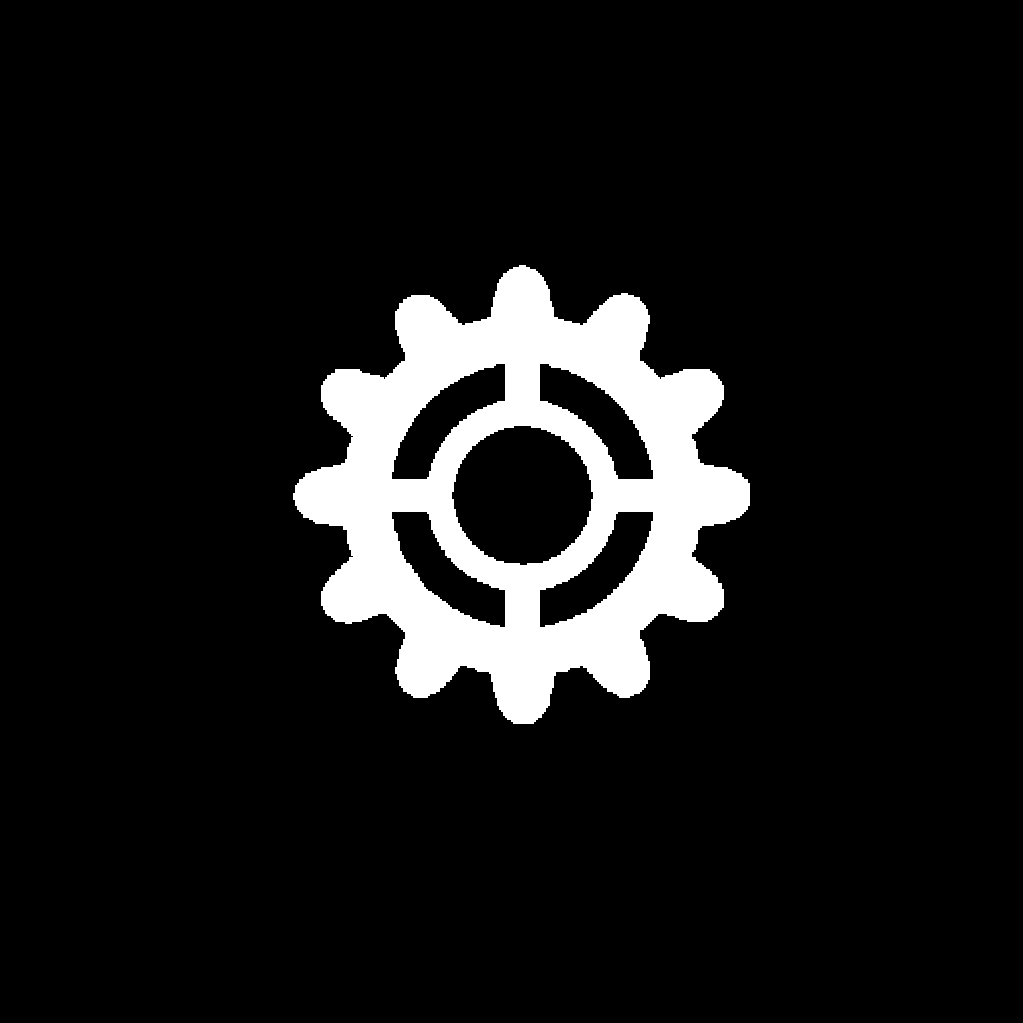}
			&
			\includegraphics[width=#1\textwidth]{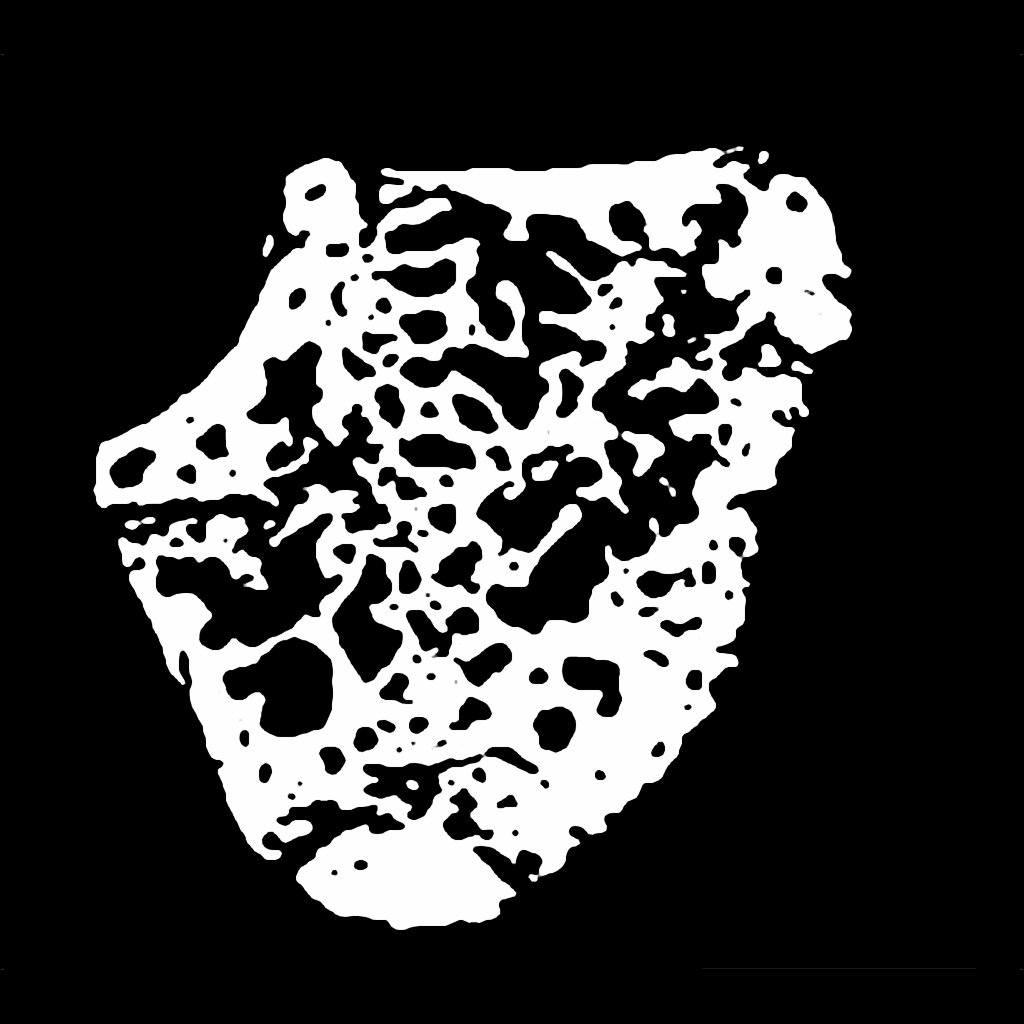}
			&
			\includegraphics[width=#1\textwidth]{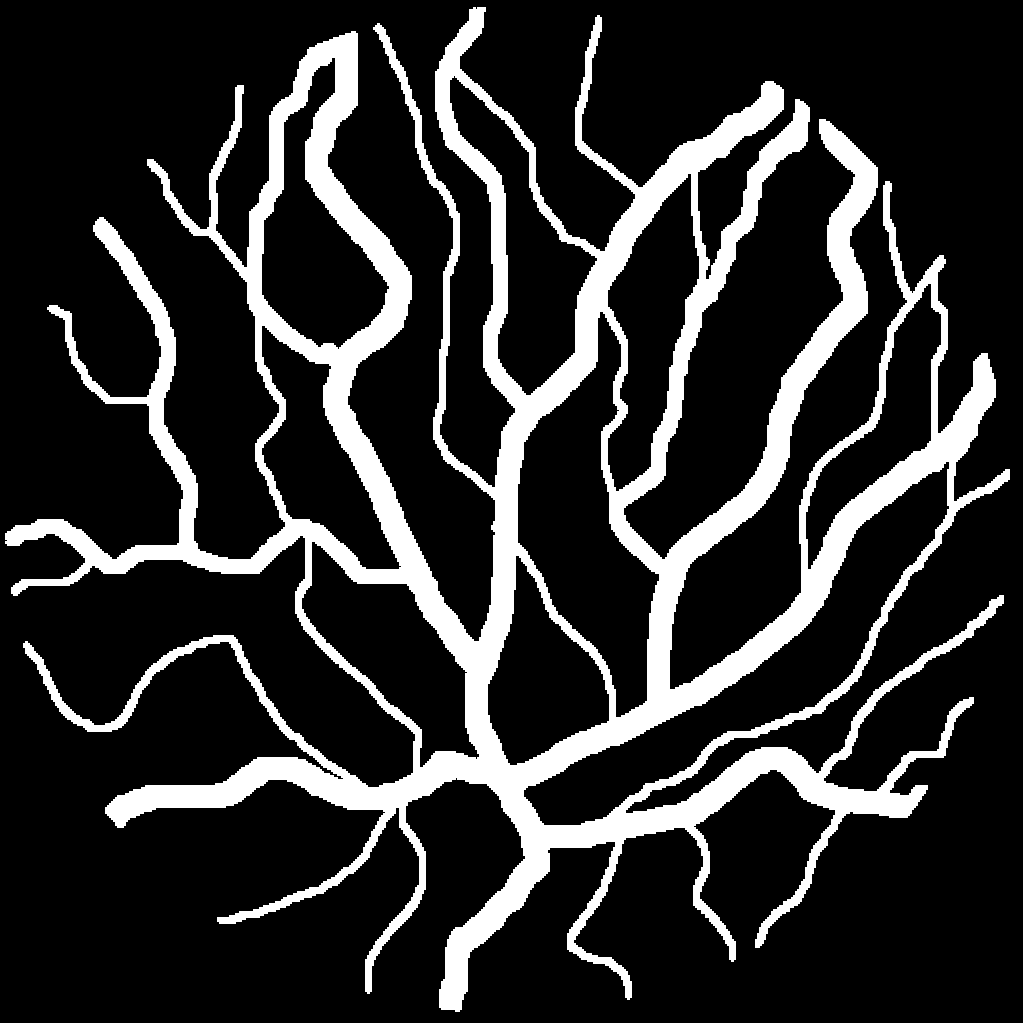}
			&
			\includegraphics[width=#1\textwidth]{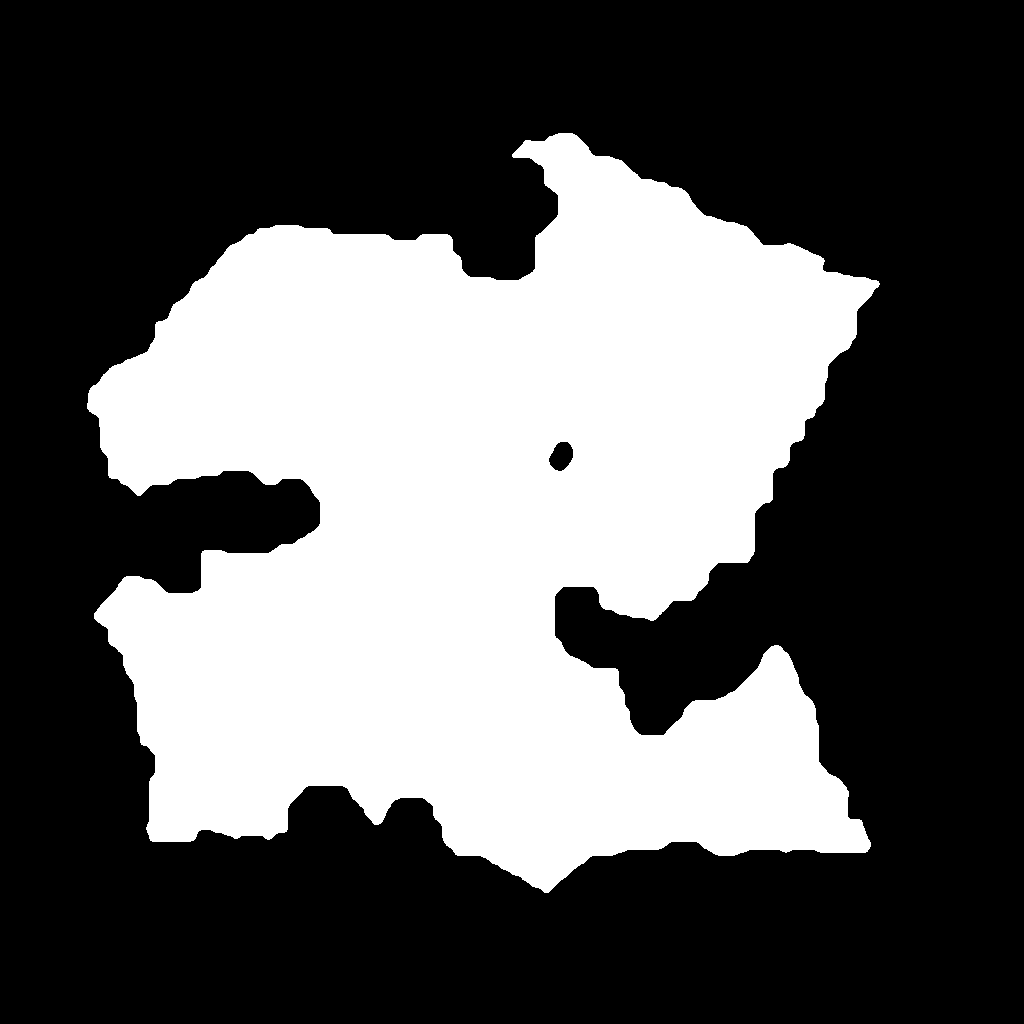}
			&
			\includegraphics[width=#1\textwidth]{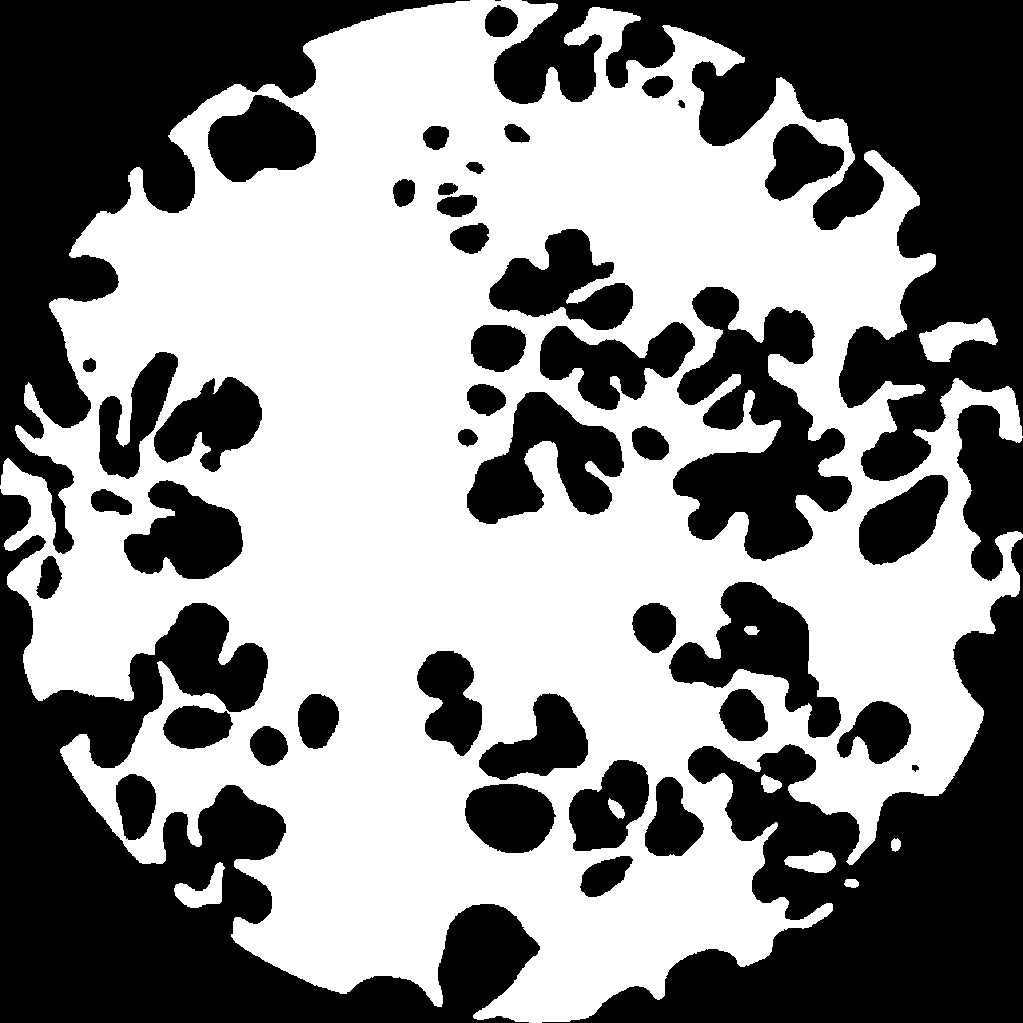}
			&
			\includegraphics[width=#1\textwidth]{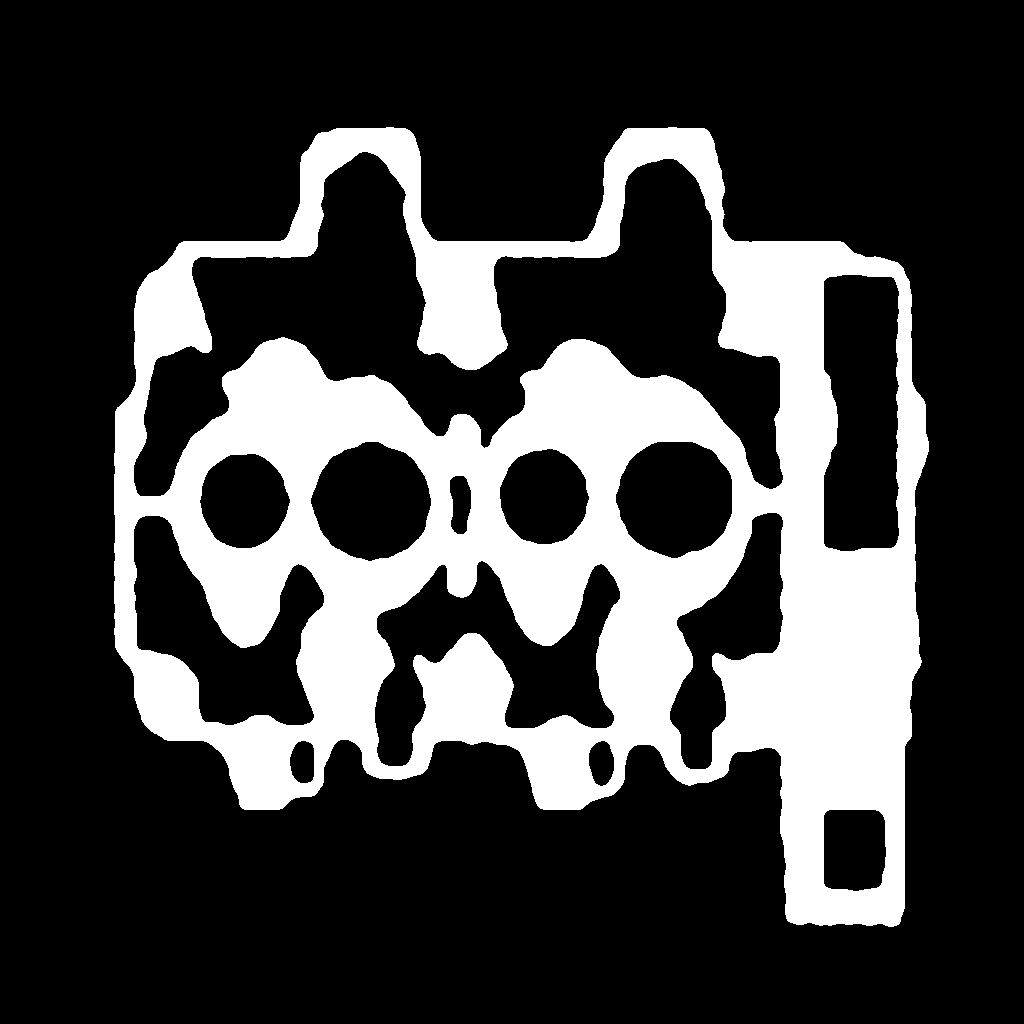}
			 \\
			\footnotesize \texttt{gear} & \footnotesize \texttt{bone} & \footnotesize \texttt{vessel} & \footnotesize \texttt{batenburg} & \footnotesize \texttt{roux} & \footnotesize \texttt{skulls} 
			
		\end{tabular}
	\end{centering}
}

\section{Experiments}\label{sec:Experiments}
In this section we numerically evaluate the proposed approach. In a first experiment we compare the Riemannian Gradient (RG) descent, see Algorithm ~\ref{alg:RG-Armijo},
with a state-of-the-art Accelerated Bregmann Proximal Gradient (ABPG) method \cite{Hanzely:2021vc}. In a second experiment we evaluate one-level vs. two-level (2L) schemes and compare our approach using Riemannian Gradient descent on both fine and coarse grids with a state-of-the-art first-order multilevel approach~\cite{Kocvara:2016} based on Projected Gradients (PG).

\subsection{Data Setup}
For our numerical evaluation we considered the phantom ($1024\times1024$ pixels) shown by Figure \ref{fig:phantoms}. The tomographic projection matrices were computed by means of the  ASTRA-toolbox\footnote{https://www.astra-toolbox.com}, using parallel beam geometry with equidistant angles in the range $[0, \pi]$. The undersampling rate at the fine grid was chosen to be $2\%$ and $20\%$, respectively, corresponding to $20$ and $200$ projection angles. Each entry $a_{ij}$ of
the projection matrix $A$ corresponds to the length of the line segment of the $i$-th projection ray passing through the $j$-th pixel in the image domain (Figure \ref{fig:find-coarse-A}). At every level the width of the detector-array was set to the grid size, so that at each scale every pixel intersects with at least a single projection ray.

\begin{figure}[ht]
 %\centering
 %\includegraphics[width=0.15\textwidth]{Figures/Images/gear.png}
 %  \includegraphics[width=0.15\textwidth]{Figures/Images/bone.png}
 % \includegraphics[width=0.15\textwidth]{Figures/Images/vessel.png}
 %  \includegraphics[width=0.15\textwidth]{Figures/Images/batenburg.png}
 % \includegraphics[width=0.15\textwidth]{Figures/Images/roux.png}
 %   \includegraphics[width=0.15\textwidth]{Figures/Images/skulls.png}
    \showPhantoms{0.15}
    
  \caption{The phantoms ($1024\times1024$) used for the numerical evaluation exhibit both fine and large scale structures and various shapes.}
  \label{fig:phantoms}
\end{figure}

\subsection{Implementation Details}
The coarse grid update in Algorithm \ref{alg:twogridgeometric} was performed by Armijo line search with a modification, such that if the calculated feasible stepsize was numerically close to $0$, then the coarse step was discarded and a normal step was performed instead. 

For an efficient implementation of the geometric grid transfer operators we used vectorized versions. For the experiments in this work we use uniform weights $\omega_i$, therefore the geometric mean, defined in Eq. \eqref{eq:def-mean-B}, can be reformulated as
    \begin{equation}
    \ol{\eta}_{\Omega} 
    =  \frac{\prod_{i\in I}\big(\frac{\eta_{i}}{1-\eta_{i}}\big)^{\w_{i}}}{1+\prod_{i\in I}\big(\frac{\eta_{i}}{1-\eta_{i}}\big)^{\w_{i}}}
    = \frac{e^{\sum_{i\in I}\w_{i}\log\frac{\eta_{i}}{1-\eta_{i}}}}{1+e^{\sum_{i\in I}\w_{i}\log\frac{\eta_{i}}{1-\eta_{i}}}}
    = \frac{e^{BI\big(\log\frac{\eta}{1-\eta}\big)}}{1+e^{BI\big(\log\frac{\eta}{1-\eta}\big)}},
\end{equation}
where $BI(v)$ is the standard bilinear interpolation operator. We used  PyLops\footnote{https://github.com/PyLops/pylops}, the linear operator library for Python, for performing bilinear interpolation and its transpose operation. Consequently, the grid transfer operators \eqref{eq:def-P}, \eqref{eq:dPxv} and \eqref{eq:def-TR} can be vectorized by using bilinear interpolation, resulting in
\begin{subequations}
\begin{align}
    P(x) &= \ol{x}_{\Omega} \\
    dP_{x}u &= G_{n}(P(x))^{-1} BI(G_{m}(x)u) \\
    TR_{y}v & = BI^{\T}(G_{n}(P(R(y)))^{-1} G_{n}(y) v),
\end{align}
\end{subequations}
where $BI^{\T}(v)$ denotes the transposed bilinear interpolation operator. Note that in case of non-uniform weights, the operation $BI(v)$ can be still represented by a corresponding linear operator.

In order to avoid numerical issues very close to the boundary of the box manifold, we clip each component $y_i$ to $[\varepsilon, 1- \varepsilon]$ with $\varepsilon = 10^{-10}$. This clipping step is performed directly after evaluating the exponential map \eqref{eq:def-exp}.

For all algorithms in our numerical evaluation we used the same set of parameters. The maximum number of iterations was set to $50$, which also serves as termination criterion. We used the natural initialization at $y^{0} = 0.5\, \eins$ (uninformative choice). The parameters for the Armijo~line~search~\cite[Def. 4.2.2]{Absil:2008aa} were set to $\sigma = 10^{-4}$, $\beta = 0.6$, $\alpha_0 = 1 / \beta$. For condition \eqref{coarse_condition} and its geometric version \eqref{eq:Gratton-geometric} for optimization on the coarse model, we selected $\eta = 0.49$ and $\varepsilon = 10^{-3}$. Regarding the objective function $f$ defined in Eq. \eqref{eq:intro-approach}, we set $\lambda=0.5$ for weighting the smoothed total variation with smoothing parameter $\rho=0.5$.

\subsection{Experiments and Discussion}
In a first experiment, we studied the impact of the Riemannian geometry on first-order optimization. We compared Riemannian gradient descent with a state-of-the-art accelerated proximal gradient method, the ABPG method \cite[Alg. 4]{Hanzely:2021vc}, 
that is suited to minimize objectives that are not Lipschitz-smooth, like the $\KL$-divergence based objective function \eqref{eq:intro-approach}. Results are shown as Figure \ref{fig:RGvsABPG}. By choosing the two quite different under-sampling scenarios, we created one problem instance with 200 angles (20\% undersampling) that is likely to have a unique optimum, see \cite{Kuske2019}, and a second problem with
multiple solutions due to the high undersampling ratio of 2\% with 20 angles. 
%As expected, the performance of both algorithms do not differ much in the first case but in the second case
%geometry shows an high impact.
The proposed method performed on par or better than ABPG and was significantly superior for that latter three phantoms shown in Figure \ref{fig:phantoms} that have more homogeneous shapes.

In a second experiment we studied the influence of a using a second coarse level for computing efficiently descent directions on a $512\times 512$ grid, as summarized by Algorithm \ref{alg:twogridgeometric}. Figures \ref{fig:2LevelResults} and \ref{fig:2LevelResults-20angles} show the results for the Riemannian gradient descent: using one level versus two levels compared with state of the art ABPG. We observed the following. Firstly, the impact of the two-level acceleration is much stronger in the highly underdetermined case. Secondly, we concede 
that although ABPG performed worse in both scenarios, the cost of the iterations on the fine level 
are more expensive when using our method due to line search. On the coarser level however, the higher cost of the line search is compensated by using a much smaller number of variables for computing search directions.

Finally, we compared the proposed method to the Euclidean state-of-the-art multilevel method in \cite{Kocvara:2016}, that is capable on handling box constraints. The results are shown as Figure \ref{fig:2LevelResults-20anglesPGvsRG}. We call the latter method two-level projected gradient (2L PG)
as it uses the projected gradient method on both levels for optimizing the fine grid objective as well as the coarse grid model. Two-level projected gradient uses a scaled-down box that is adapted to the current iterate in such a way that feasibility is achieved after prolongation of the coarse level correction.
Such an adaption of the constraints is not needed in our proposed approach and therefore makes our approach more flexible. We notice however that the impact of the second level in the case of the projected gradient is stronger than in the geometric case.
We believe that this is due to the fact that our coarse model looses its convexity when the iterate is close to the boundary of the feasible set.

%%%%%%%%%%%%%%%%
\begin{figure}[ht]
 \centering
   \includegraphics[width=0.45\textwidth]{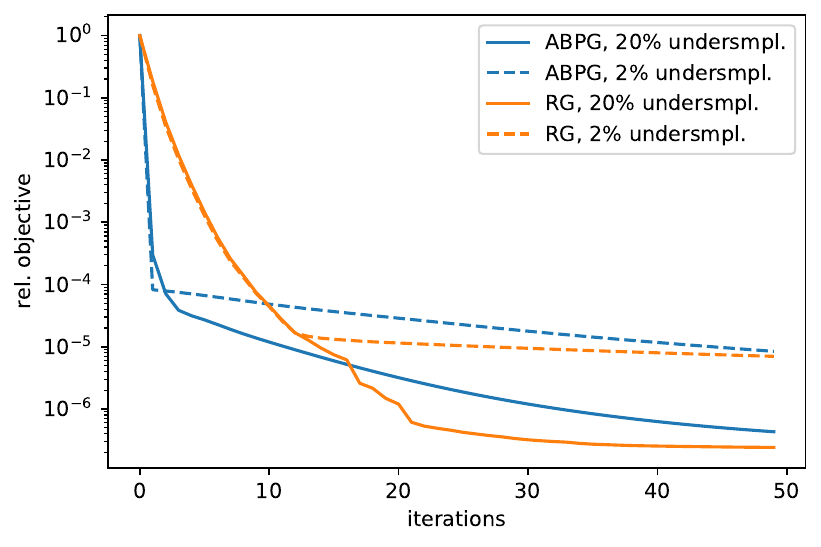}
   \includegraphics[width=0.45\textwidth]{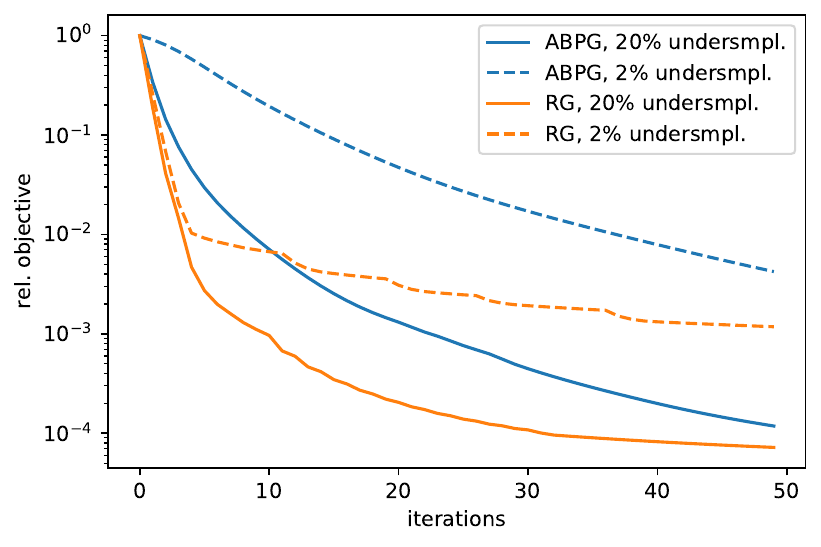}\\
 \includegraphics[width=0.45\textwidth]{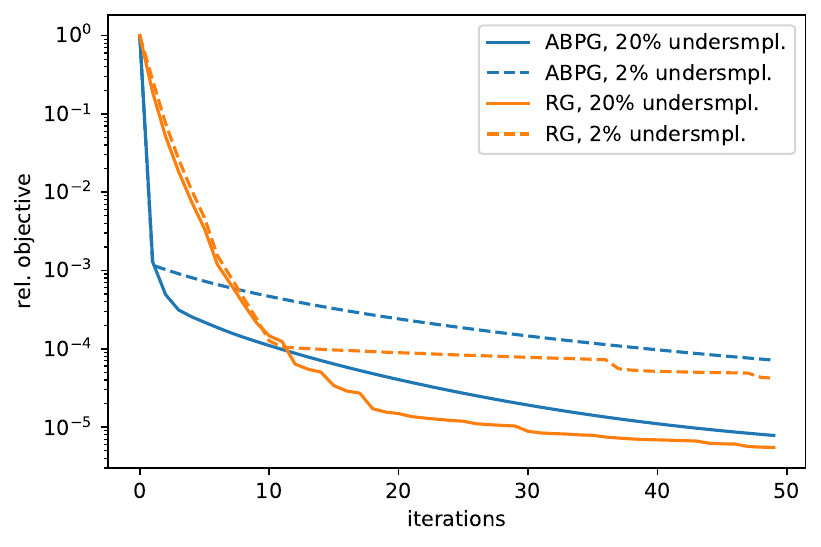}
 \includegraphics[width=0.45\textwidth]{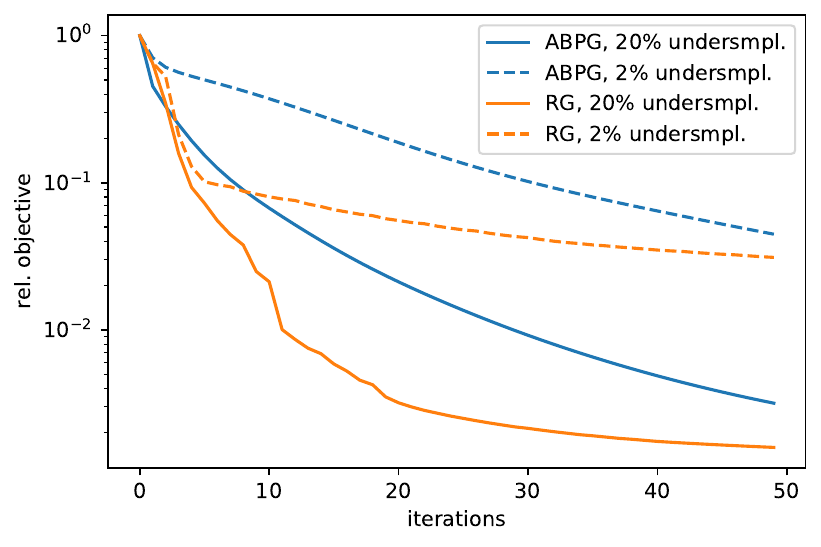}\\
 \includegraphics[width=0.45\textwidth]{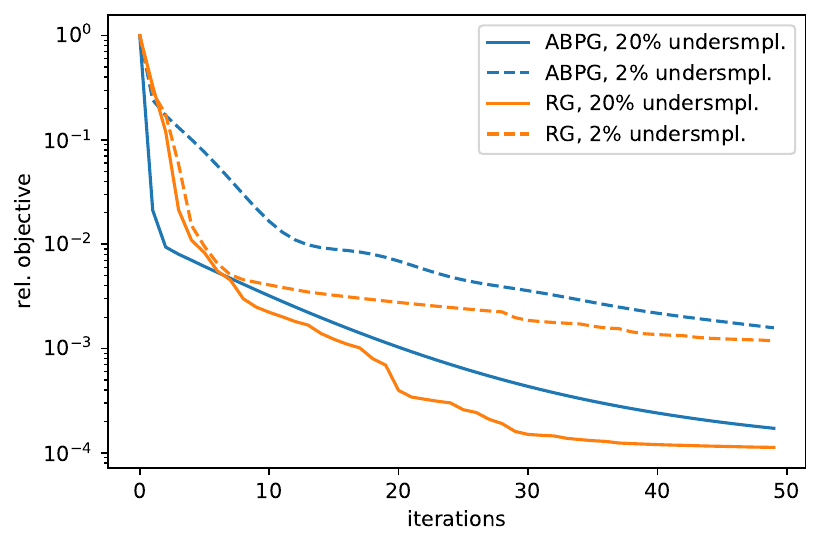}
 \includegraphics[width=0.45\textwidth]{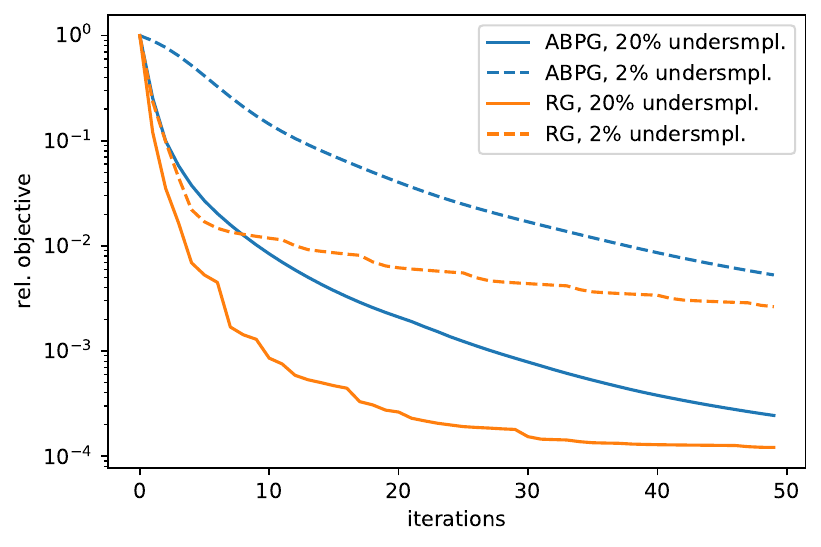}
  \caption{
  \textbf{ABPG vs. Riemannian gradient (RG) descent} in terms of relative objective values is compared on 2\% and 20\% undersampled projection data. The left column corresponds to the first three phantoms: \texttt{gear}, \texttt{bone}, \texttt{vessel}  in Figure \ref{fig:phantoms}, the right columns to the last three:  \texttt{batenburg}, \texttt{roux}, \texttt{skulls}. The Riemannian gradient descent achives lower objective values due to proper step-size selection (line search). For the more homogeneous phantoms corresponding to the right column, the proposed method significantly outperforms ABPG~\cite{Hanzely:2021vc}.
  }
  \label{fig:RGvsABPG}
 \end{figure}
 %%%%%%%%%%%%
 \begin{figure}[ht]
 \centering
   \includegraphics[width=0.45\textwidth]{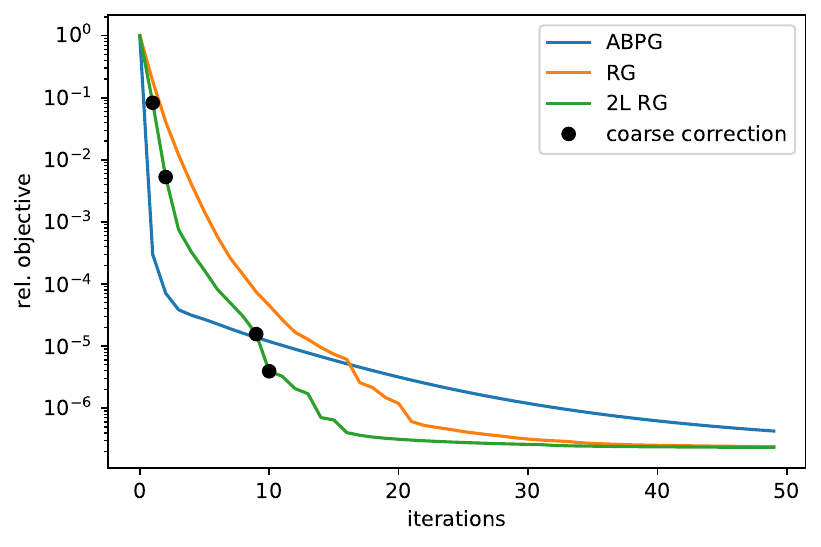}
   \includegraphics[width=0.45\textwidth]{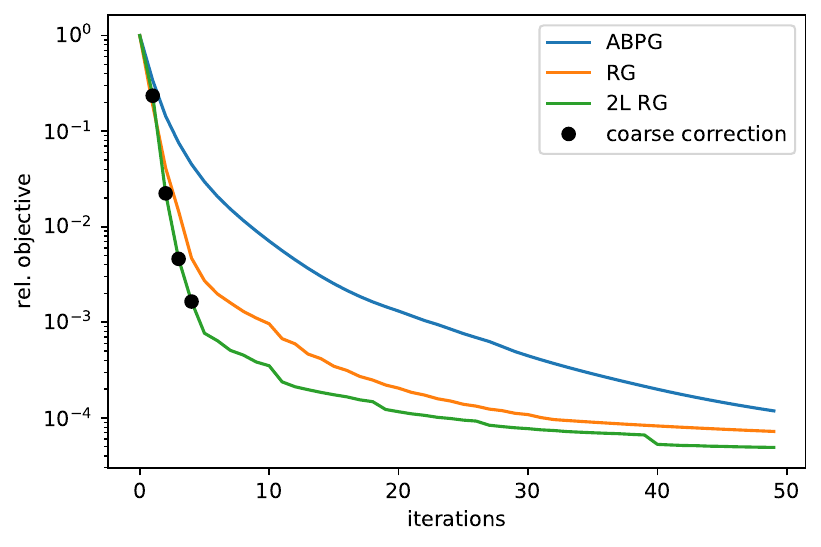}\\
 \includegraphics[width=0.45\textwidth]{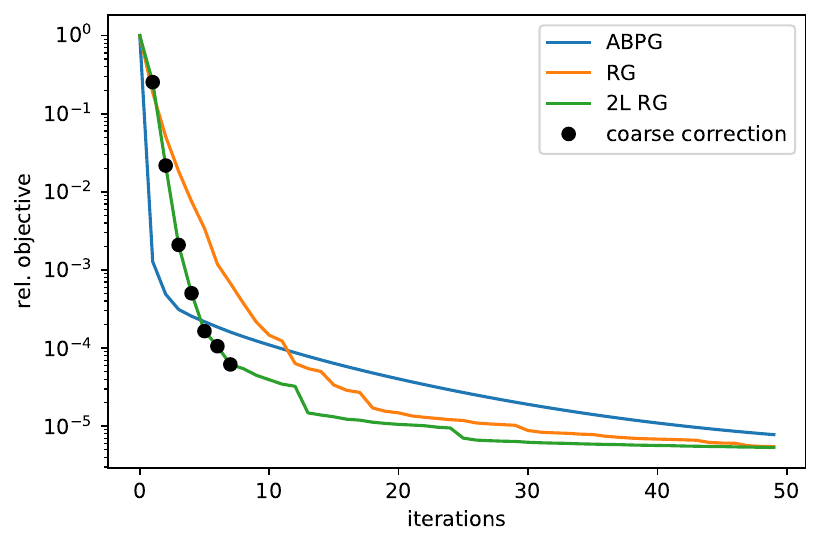}
 \includegraphics[width=0.45\textwidth]{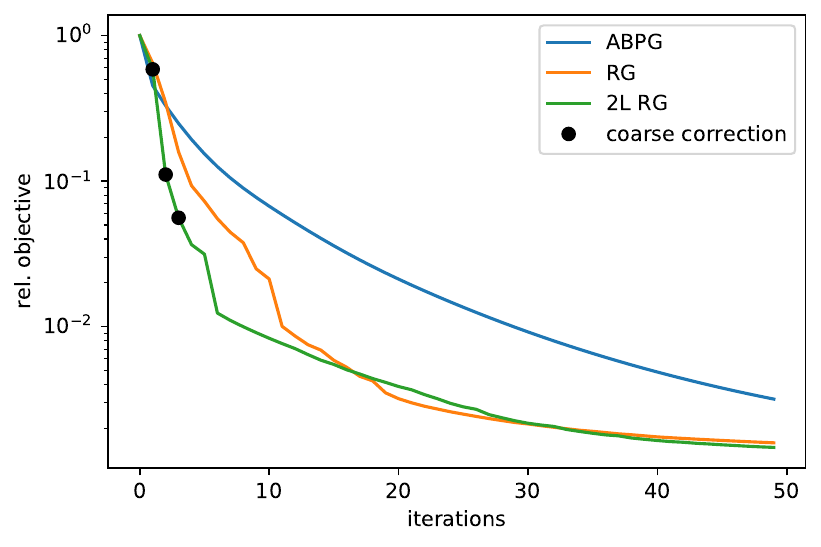}\\
 \includegraphics[width=0.45\textwidth]{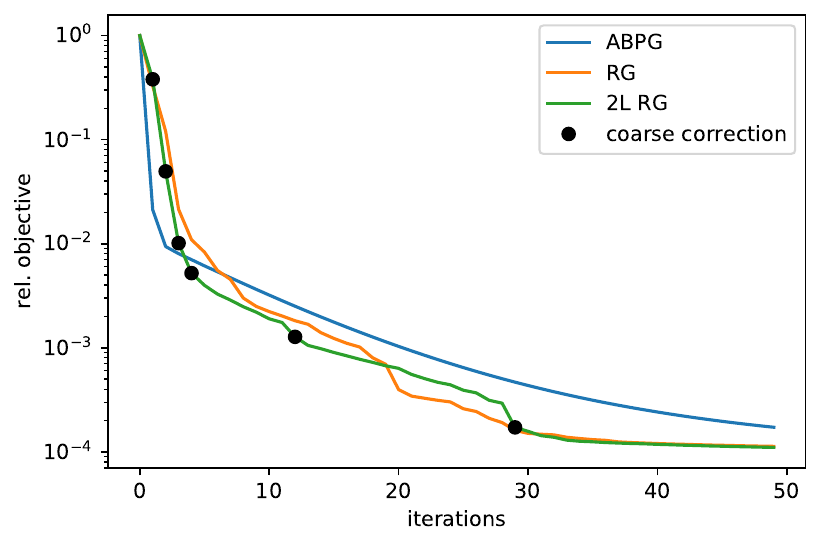}
 \includegraphics[width=0.45\textwidth]{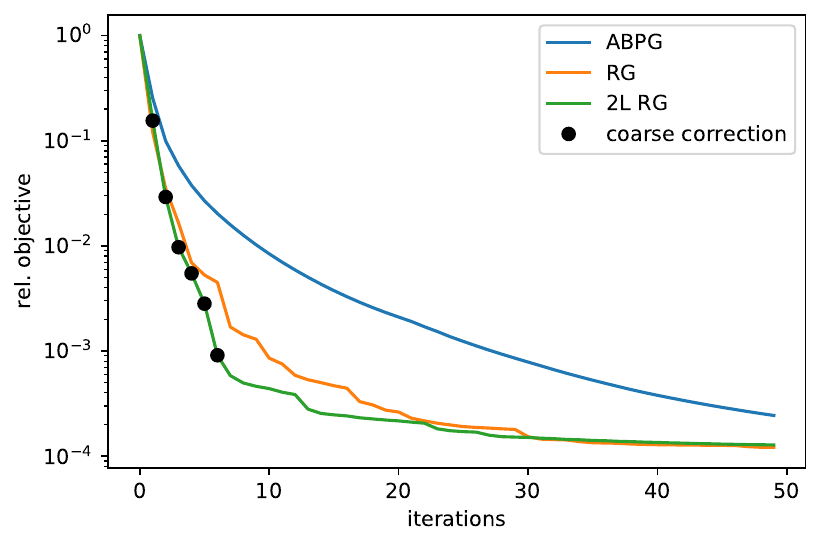}
  \caption{
  \textbf{20\% undersampling: comparison of relative objective function values} for single-level resp.~two-level (2L) Riemannian gradient descent (RG, 2L RG) and ABPG \cite{Hanzely:2021vc}. 
  The left column corresponds to the first three phantoms: \texttt{gear}, \texttt{bone}, \texttt{vessel} in 
  Figure \ref{fig:phantoms}, the right columns to the last three: \texttt{batenburg}, \texttt{roux}, \texttt{skulls}.
  Black dots indicate when descent directions were computed on coarser grids. 
  On the coarse grid we have 40\% undersampling and the coarse problem has a unique solution \cite{Kuske2019}. As a consequence we quickly approach the boundary of the box. This results in an inefficient update using the Armijo line search and more similar curves for single-level resp.~two-level (2L) Riemannian gradient descent.}
  \label{fig:2LevelResults} 
  \end{figure}
  
  \begin{figure}[ht]
 \centering
   \includegraphics[width=0.45\textwidth]{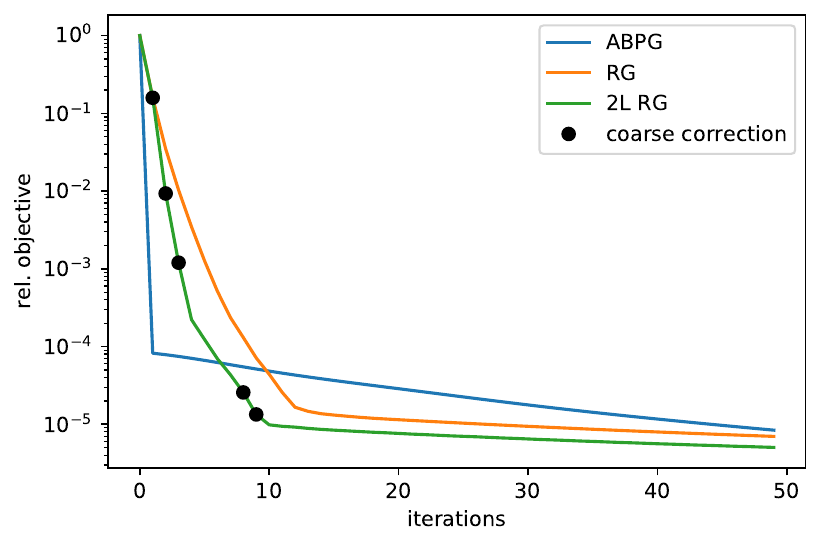}
   \includegraphics[width=0.45\textwidth]{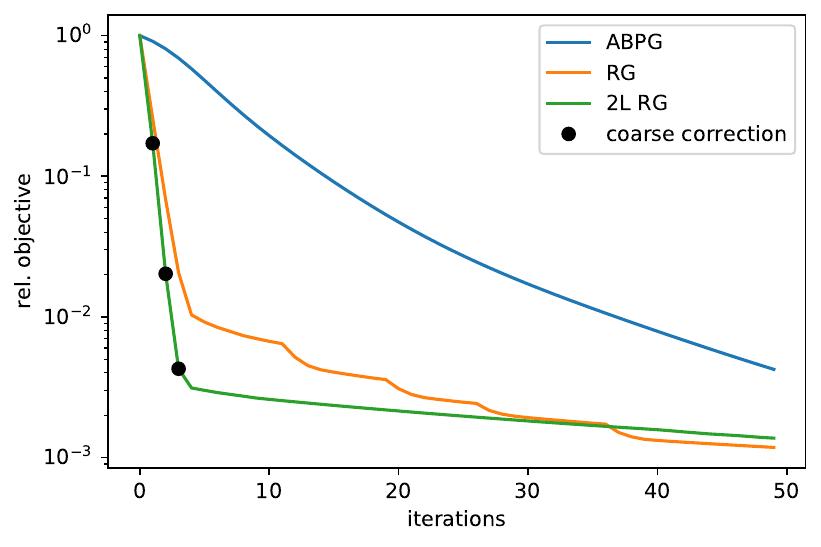}\\
 \includegraphics[width=0.45\textwidth]{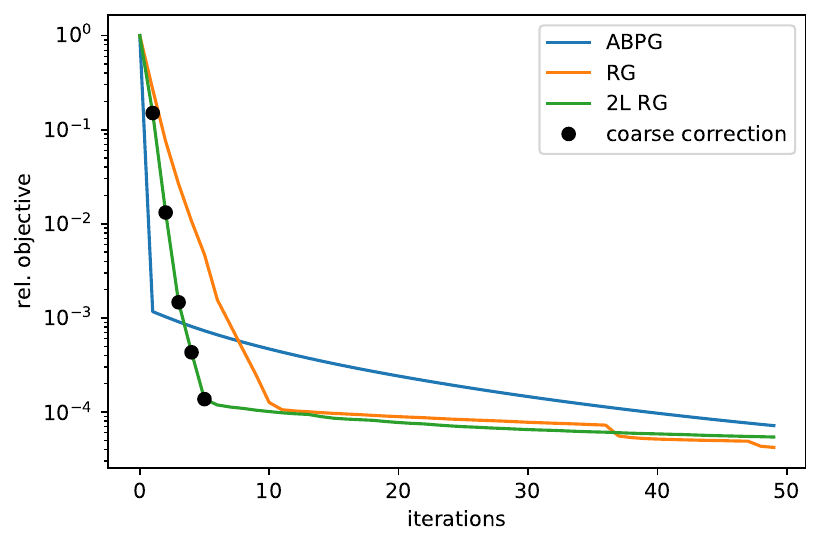}
 \includegraphics[width=0.45\textwidth]{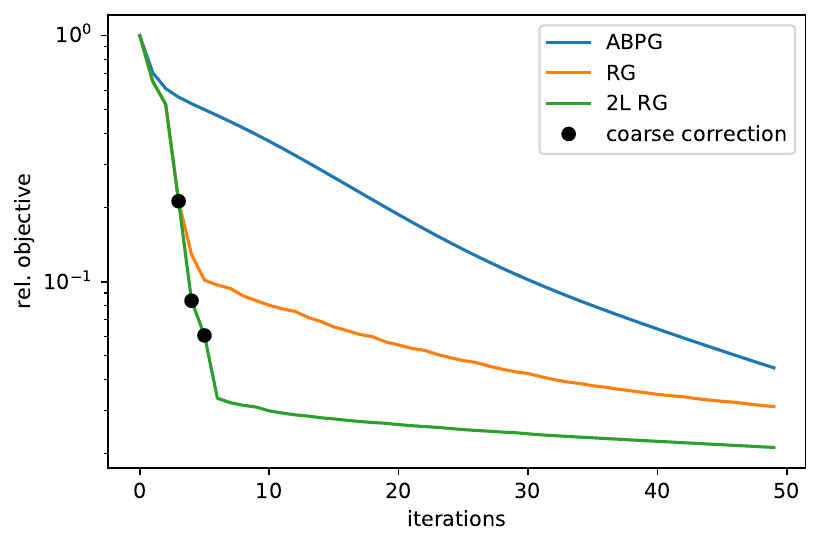}\\
 \includegraphics[width=0.45\textwidth]{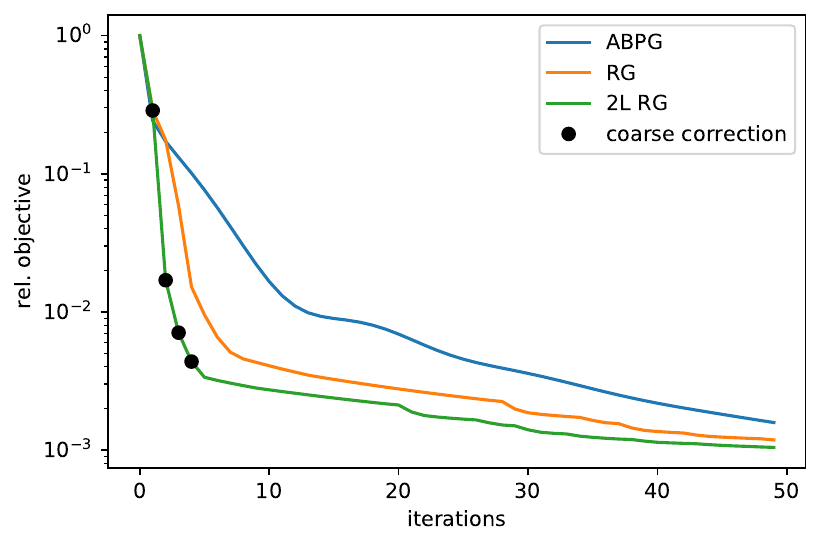}
 \includegraphics[width=0.45\textwidth]{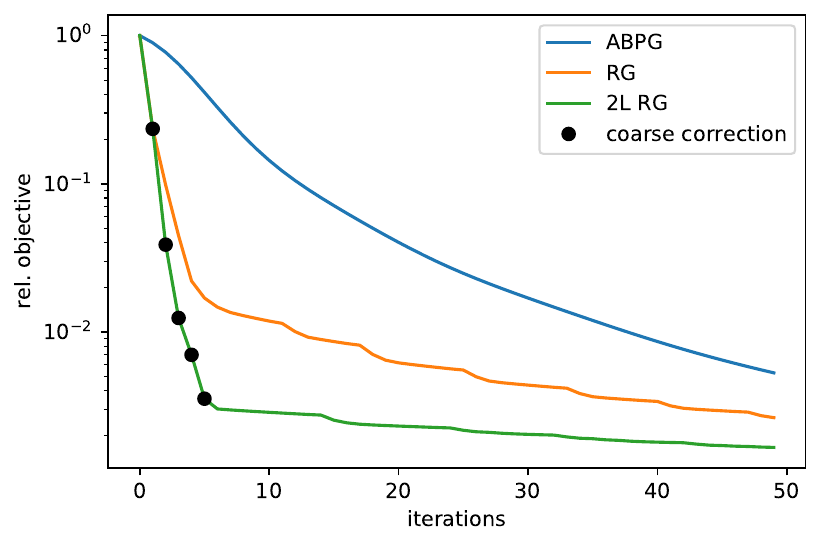}
  \caption{
  \textbf{2\% undersampling: comparison of relative objective function values} for single-level resp.~two-level (2L) Riemannian gradient descent (RG, 2L RG) and ABPG \cite{Hanzely:2021vc}. 
  The left column corresponds to the first three phantoms: \texttt{gear},  \texttt{bone}, \texttt{vessel} in 
  Figure \ref{fig:phantoms}, the right columns to the last three:  \texttt{batenburg}, \texttt{roux}, \texttt{skulls}.
  Black dots indicate when descent directions were computed on coarser grids. The two-level schemes - where we now have 4\% undersampling - aggressively decreases the objective, in particular for more homogeneous phantoms
 in the right column.
  }
  \label{fig:2LevelResults-20angles} 
\end{figure}

 \begin{figure}[ht]
 \centering
   \includegraphics[width=0.45\textwidth]{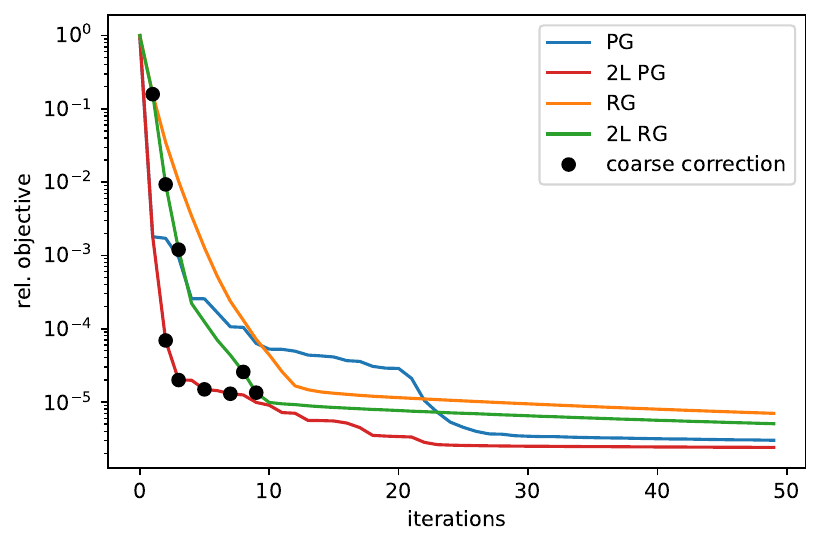}
   \includegraphics[width=0.45\textwidth]{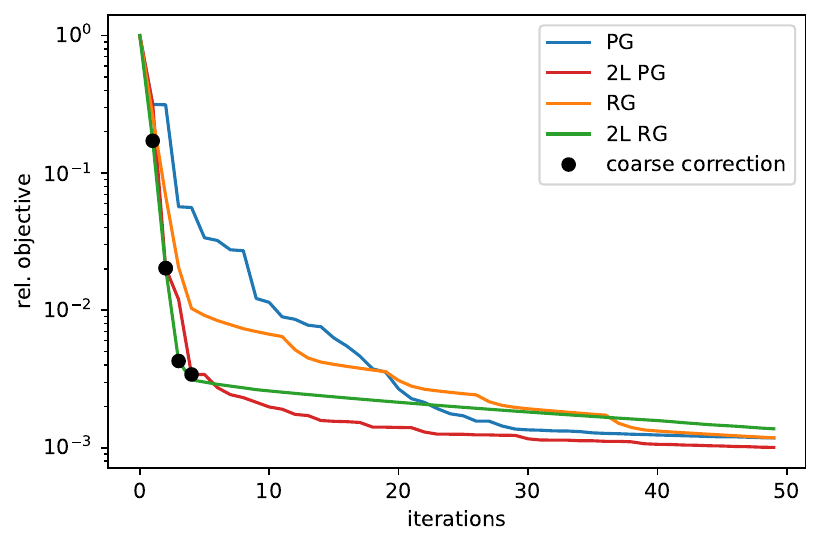}\\
 \includegraphics[width=0.45\textwidth]{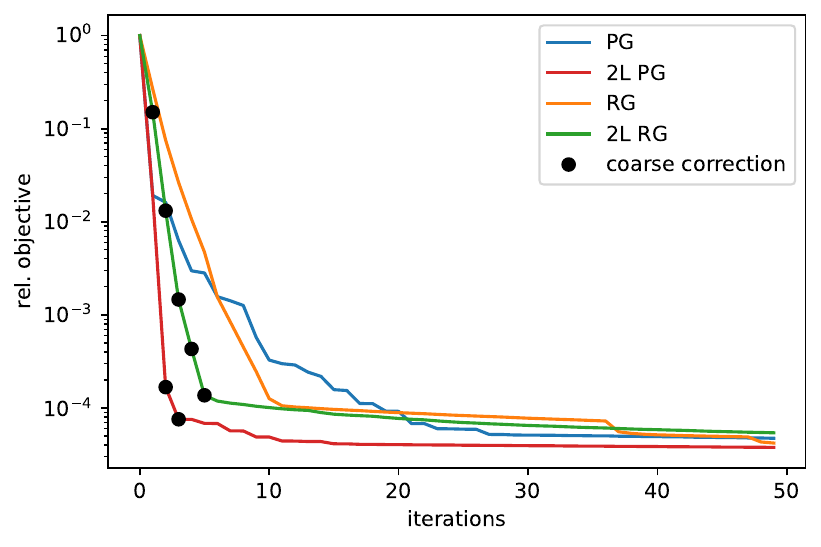}
 \includegraphics[width=0.45\textwidth]{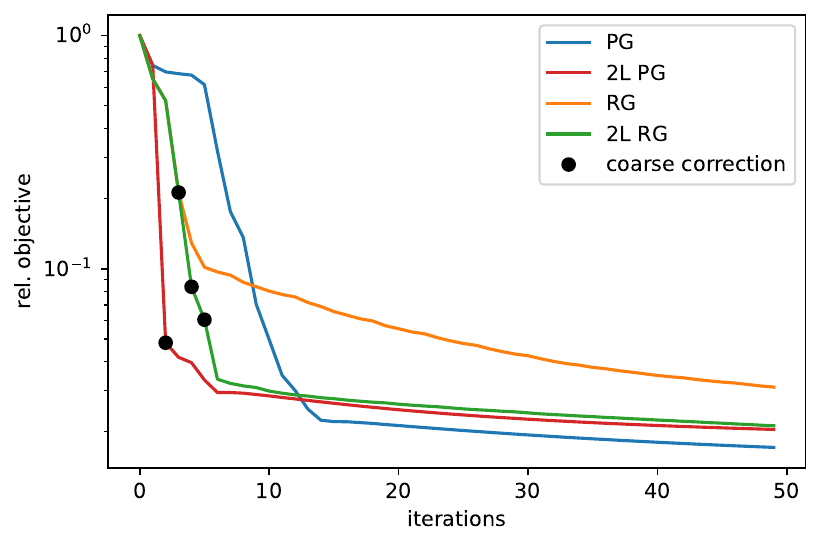}\\
 \includegraphics[width=0.45\textwidth]{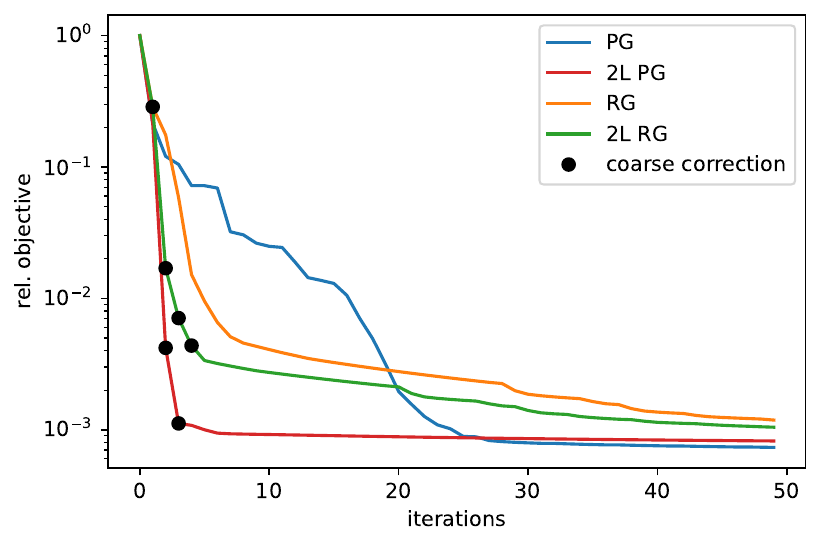}
 \includegraphics[width=0.45\textwidth]{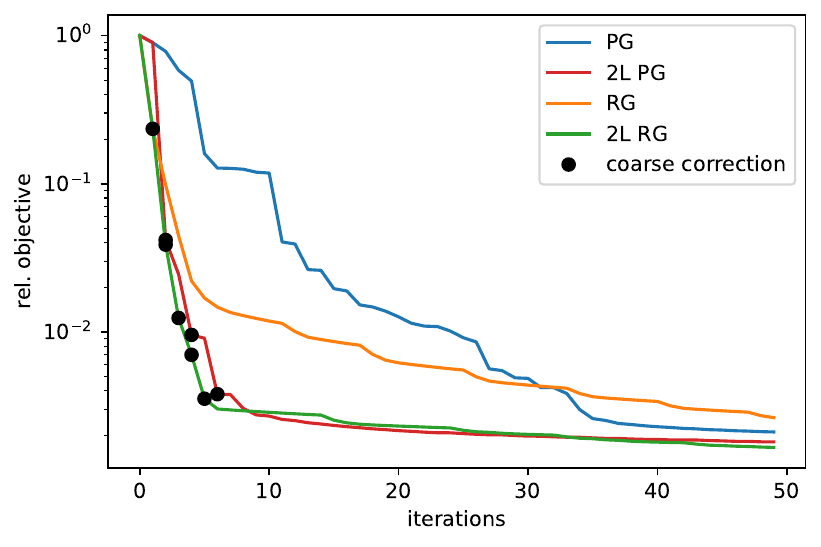}
  \caption{
  \textbf{Projected gradient (PG) vs. Riemannian gradient (RG) descent} in terms of relative objective values is compared for the single-level and two-level (2L) scenario with 2\% undersampled projection data. The left column corresponds to the first three phantoms: \texttt{gear}, \texttt{bone}, \texttt{vessel} in Figure \ref{fig:phantoms}, the right columns to the last three:  \texttt{batenburg}, \texttt{roux}, \texttt{skulls}. Black dots indicate when descent directions were computed on coarser grids. Overall, the Riemannian version of the gradient descent decreases faster the objective than projected gradient, since the descent direction already respects the constraints. Both two-level approaches show in addition more rapid reduction of the objective function.
  }
  \label{fig:2LevelResults-20anglesPGvsRG} 
\end{figure}

% !TEX root =  ../SI-Bregman.tex
%%%%%%%%%%%%%%%%%%%%%%%%%%%%%%%

\section{Conclusions}\label{sec:conclusions}

We introduced a novel approach to geometric multilevel optimization. The approach employs information geometry in order to devise all ingredients of the iterative multilevel scheme. Invoking coarse level representations for computing descent directions effectively accelerates convergence. Experiments conducted for a range of instances of an ill-posed linear inverse problem with a non-quadratic convex regularizer and box constraints demonstrate promising performance of the method relative to the state-of-the-art.

The derivation of the approach for boxed-constrained convex programs can be transferred to other convex programs with simply structured feasible sets, analogous to turning parameter spaces of probability distributions into Riemannian manifolds in information geometry. Simplices instead of boxes as feasible sets provide an example \cite{Astrom:2017ac}.

This paper mainly focuses on the case of two-grid geometric optimization, which constitutes the core problem of multilevel optimization. Our future work will examine the multilevel case in detail, the selection of an appropriate number of resolution levels and related problems, and possible refinements. The latter includes machine learning components that can be controlled precisely, in order to optimize the transfer between levels, to adapt to each concrete problem instance and to given input data, for further accelerating the overall optimization process.

\section*{Acknowledgement}
We thank Jan Plier (Heidelberg University) for simulation code that efficiently evaluates
our objective. We greatly benefited form fruitful discussions with Christoph Schn\"orr (Heidelberg University). To Oana Curtef (University of W\"urzburg) we are indebted for her observation concerning line search. SM, SP and MZ gratefully acknowledge the generous and invaluable support of the Klaus Tschira Foundation.
%%%%%%%%%%%%%%%%%%%%%%%%%%%%%
\bibliographystyle{amsalpha}
\bibliography{main}
\end{document}